 \newcommand{\talpha}{\tilde\alpha}
 \newcommand{\txi}{\tilde\xi}
\DeclareMathOperator{\MC}{MC}
\newcommand{\ot}{\otimes}
\newcommand{\Z}{\mathbb{Z}} 
\newcommand{\Q}{\mathbb{Q}}
\newcommand{\mcc}{\mathcal{C}}
\renewcommand{\tilde}{\widetilde}
\DeclareMathOperator{\id}{id} 
\DeclareMathOperator{\map}{map}
\DeclareMathOperator{\aut}{aut}
\DeclareMathOperator{\ad}{ad}
\DeclareMathOperator{\Hom}{Hom}
\DeclareMathOperator{\Span}{span}
\DeclareMathOperator{\Der}{Der}
\newcommand{\la}{\langle}
\newcommand{\ra}{\rangle}
\DeclareMathOperator{\im}{im}
\renewcommand{\dots}{\ldots}
\newcommand{\C}{\mathbb{C}}
\newcommand{\h}{\mathcal{H}}
\renewcommand{\L}{\mathbb L}
\newcommand{\g}{\mathfrak g}
\renewcommand{\h}{\mathfrak h}
\newcommand{\bc}{\bar{\mathcal C}}
\newtheorem{thm}{Theorem}[section]
\newtheorem{cor}[thm]{Corollary}
\newtheorem{prop}[thm]{Proposition}
\newtheorem{claim}[thm]{Claim}
\newtheorem{lemma}[thm]{Lemma} 
\theoremstyle{definition}
\newtheorem{dfn}[thm]{Definition}
\newtheorem{exmp}[thm]{Example}
\newtheorem{rmk}[thm]{Remark}
\newcommand{\Addresses}{{% additional braces for segregating \footnotesize
  \bigskip
  \footnotesize
  
\textsc{Department of Mathematics, Stockholm University, SE-106 91 Stockholm, Sweden}\par\nopagebreak
  \textit{E-mail addresses:} \texttt{alexb@math.su.se, bashar@math.su.se}
}}
\begin{document}
\title{A dg Lie model for relative homotopy automorphisms} 
\author{Alexander Berglund and Bashar Saleh} 
\date{}

\maketitle

\begin{abstract}
We construct a dg Lie model for the universal cover of the classifying space of the grouplike monoid of homotopy automorphisms of a space that fix a given subspace.
\end{abstract}
\section{Introduction} 
The classifying space of the monoid of homotopy automorphisms  of a space $X$ classifies fibrations with fiber homotopy equivalent to $X$. Given a subspace $A\subset X$, the classifying space of the  monoid  $\aut_A(X)$ of homotopy automorphisms that restrict to the identity on $A$  classifies all fibrations $E\to B$ with fiber homotopy equivalent to $X$ under the trivial fibration $A\times B\to B$, such that, over each $b\in B$, the canonical map from $A$ to $\im(A\to E_b)$ is a weak equivalence. The special case in which $X$ is a manifold with a non-empty boundary and where $A=\partial X$ is the boundary, has been of interest in the study of homological stability for homotopy automorphisms of manifolds (see \cite{BM12,BM14,grey}).

The main result of this paper is a proof of the following theorem:
\begin{thm}[\text{\cite[Theorem 3.4.]{BM14}}]\label{mainThm}
Let $A\subset X$ be a cofibration of simply connected spaces with homotopy types of finite CW-complexes, and let $i\colon\L_A\to \L_X$ be a cofibration that models the inclusion $A\subset X$ and where $\L_A$ and $\L_X$ are cofibrant Lie models for  $A$ and $X$ respectively. A Lie model for the universal covering of $B\aut_A(X)$ is given by the positive truncation of the dg Lie algebra of derivations on $\L_X$ that vanish on $\L_A$, denoted by $\Der(\L_X\|\L_A)\la 1\ra$.
\end{thm}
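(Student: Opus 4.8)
The plan is to transfer Theorem~\ref{mainThm} to the category $\dgl$ of dg Lie algebras over $\k=\Q$. By Quillen's equivalence between simply connected rational spaces and positively graded dg Lie algebras, the pair $(X,A)$ is modelled by the cofibration $i\colon\L_A\to\L_X$; write $\langle-\rangle$ for the corresponding spatial realisation functor, normalised so that $\pi_n\langle L\rangle\cong H_{n-1}(L)$, so that a dg Lie model for a simply connected space $Z$ is exactly an $L$ with $\langle L\rangle\simeq Z_\Q$. The theorem then follows from two claims: \textbf{(A)} the grouplike simplicial monoid $\aut_A(X)$ is, after rationalisation and on the component of the identity, weakly equivalent to the monoid $\mathrm{haut}_{\L_A}(\L_X)$ of homotopy self-equivalences of $\L_X$ in the undercategory $\dgl^{\L_A/}$; and \textbf{(B)} the universal cover of the classifying space of $\mathrm{haut}_{\L_A}(\L_X)$ is $\langle\Der(\L_X\|\L_A)\la1\ra\rangle$. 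Combining these gives $\widetilde{B\aut_A(X)}_\Q\simeq\langle\Der(\L_X\|\L_A)\la1\ra\rangle$, that is, $\Der(\L_X\|\L_A)\la1\ra$ is a dg Lie model for $\widetilde{B\aut_A(X)}$.

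For \textbf{(A)}: since $i$ is a cofibration between cofibrant objects, $\L_X$ is cofibrant--fibrant in $\dgl^{\L_A/}$, so $\mathrm{haut}_{\L_A}(\L_X)$ is computed strictly as the union of invertible components of $\map_{\dgl^{\L_A/}}(\L_X,\L_X\ot\Omega_\bullet)$, with $\Omega_\bullet$ Sullivan's simplicial cdga of polynomial forms as a simplicial frame. Because Quillen's realisation is one half of a Quillen equivalence, it carries this derived mapping space, multiplicatively, to the rational derived mapping space of self-maps of $X$ under $A$; restricting to invertible and then to identity components identifies $\aut_A(X)_{\circ,\Q}$ with $\mathrm{haut}_{\L_A}(\L_X)_\circ$. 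One must check here that the homotopy-invariant meaning of ``restricting to the identity on $A$'' from \cite{BM14}, recalled in the introduction, is exactly what working in $\dgl^{\L_A/}$ models; this uses the cofibrancy of $i$.

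Claim \textbf{(B)} is the crux, and instantiates the principle that the classifying space of the derived automorphisms of a cofibrant object is the realisation of its deformation complex --- here $\Der(\L_X\|\L_A)$, the relative derivations. To make this effective, write $\L_X\cong\L_A\amalg\L(V)$ as a graded Lie algebra (possible since $i$ is a cofibration), so that a dg Lie endomorphism of $\L_X$ over $\L_A$ is determined by its restriction to $V$ and $\Der(\L_X\|\L_A)\cong\Hom(V,\L_X)$ is a dg Lie algebra of finite type. Then a dg Lie self-equivalence of $\L_X\ot\Omega_n$ over $\L_A$ that is connected to the identity is, via its logarithm, a closed degree-zero derivation-valued polynomial $n$-form, with composition given by the Baker--Campbell--Hausdorff product; this identifies the identity component of $\mathrm{haut}_{\L_A}(\L_X)$ with the simplicial group built from the pro-nilpotent dg Lie algebra $\Der(\L_X\|\L_A)\la1\ra$ (the part of $\Der(\L_X\|\L_A)$ in degrees $\le0$ contributing nothing homotopically), and the bar construction of that simplicial group is $\MC_\bullet\big(\Der(\L_X\|\L_A)\la1\ra\big)=\langle\Der(\L_X\|\L_A)\la1\ra\rangle$. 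The resolutions and comparisons needed to turn this outline into an honest chain of weak equivalences are best organised via Koszul duality and convolution Lie algebras. Finally $H_n(\Der(\L_X\|\L_A)\la1\ra)=H_n(\Der(\L_X\|\L_A))$ for $n\ge1$ and vanishes for $n\le0$, so the truncation implements exactly the passage to the universal cover.

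I expect \textbf{(B)} to be the main obstacle, since several things must be controlled at once: the non-nilpotence of $\Der(\L_X\|\L_A)$, dealt with by passing to the truncation $\Der(\L_X\|\L_A)\la1\ra$ --- pro-nilpotent for the degree filtration --- and checking that no information about the universal cover is lost (this is where finiteness of $A$ and $X$ is used); the delooping from the simplicial automorphism group to its classifying space, together with the bookkeeping matching that degree shift with the truncation $\la1\ra$; and the compatibility of $\langle-\rangle$ with derived mapping spaces and with the bar construction relative to $\L_A$. A secondary point is \textbf{(A)}: that Quillen realisation carries the relative derived self-equivalence space to the correct rational space of homotopy automorphisms of $X$ under $A$, compatibly with composition.
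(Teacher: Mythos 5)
Your outline is exactly the ``direct'' route that \cite{BM14} suggested (generalizing Tanr\'e/Schlessinger--Stasheff to the relative setting) and that this paper deliberately avoids. The paper instead reduces to the \emph{based} case: it identifies $B\aut_{A,\circ}(X)$ with the geometric bar construction $B(*,\aut_{*,\circ}(X),\map_*^\iota(A,X))$ (Corollary \ref{BautA}), models that bar construction by the twisted semidirect product $\Der(\L_X)\la1\ra\ltimes_{\tau_*}\Hom^\tau(\bc\L_A,\L_X)\la0\ra$ using outer actions and the already-known models for $B\aut_{*,\circ}(X)$ and for based mapping spaces, and finally shows by a mapping-cone and five-lemma argument (Proposition \ref{Prop:Zeta}) that the inclusion of $\Der(\L_X\|\L_A)\la1\ra$ into this semidirect product is a quasi-isomorphism. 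The advantage of that decomposition is that every hard input is an existing theorem about the based/absolute case; the only genuinely new computation is the algebraic comparison $\zeta$.

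As written, your proposal is a plan rather than a proof: claims \textbf{(A)} and \textbf{(B)} are restatements of what must be established, and \textbf{(B)} in particular carries essentially the entire content of the theorem while its justification is deferred (``the resolutions and comparisons needed \dots are best organised via Koszul duality''). Concretely, three steps would fail or at least require substantial new arguments as stated. First, the logarithm argument in \textbf{(B)}: $\Der(\L_X\|\L_A)_0$ is in general neither nilpotent nor pro-nilpotent (a degree-zero derivation may send a generator $v$ to a multiple of $v$), so the identity component of $\mathrm{haut}_{\L_A}(\L_X)$ is not literally $\exp$ of closed degree-zero derivation-valued forms; one must isolate the unipotent part and then argue that the discrepancy is invisible after passing to the universal cover of the classifying space. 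Second, the delooping $B\exp_\bullet(\g)\simeq\MC_\bullet(\g)$ (Proposition \ref{Prop:a-deloop-b-borel}(a)) is only available for \emph{simply connected} $\g$, whereas the identity component of the automorphism monoid is governed by $\Der(\L_X\|\L_A)$ in degrees $\ge 0$; the assertion that replacing this by the $1$-connected cover ``loses no information about the universal cover'' is precisely the point that needs proof, not a bookkeeping remark. Third, step \textbf{(A)} --- that Quillen realisation carries the derived endomorphism space of $\L_X$ in $\dgl^{\L_A/}$, \emph{multiplicatively}, to the rational space of self-maps of $X$ under $A$ --- is a nontrivial compatibility of the Quillen equivalence with composition monoid structures and with undercategories, and no argument is offered. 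None of these is obviously false, but each is a genuine gap, and together they are why the paper chose the bar-construction route instead.
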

This theorem is stated in \cite{BM14} together with the suggestion that a 
proof can be given by generalizing \cite[Chapitre VII]{tanre83} , but no 
detailed proof exists in the literature. One purpose of this paper is to 
fill this gap. However, instead of following the suggested route (which 
seems to yield a rather tedious proof), we give a proof that is perhaps 
more interesting. Namely, we show that the model for relative homotopy 
automorphisms can be derived from the known model for based homotopy 
automorphisms together with general result on rational models for 
geometric bar constructions.

\subsection{Standing assumptions and notation}
\begin{itemize}
\item Throughout the paper, $X$ is a pointed simply connected space  and $A$ is a simply connected subspace that contains the basepoint of $X$. The inclusion $A\subset X$ is assumed  be a cofibration. We let $\L_A$ and $\L_X$ denote cofibrant dg Lie models over $\mathbb Q$ for $A$ and $X$ respectively. A  dg Lie algebra is cofibrant if and only if  its underlying graded Lie algebra is a free graded Lie algebra $\L(V)$ on a graded vector space $V$. We let  $i\colon\L_A\to\L_X$ denote a cofibration that  models the inclusion $A\subset X$. Recall that a map of free dg Lie algebras is a cofibration if and only if it is a  free map (see the Remark after Proposition 5.5. in \cite{quillen69}).

\item All dg Lie algebras and dg coalgebras are homologically graded, which means that that the differential lowers the degree. All  dg associative algebras are cohomologically graded. Note that if $L$ is a dg Lie algebra and $\Omega$ is a commutative dg associative algebra then $L\ot\Omega$ is a dg Lie algebra over $\Omega$ with homological grading given by
$$(L\ot\Omega)_n = \bigoplus_{p-q=n}L_p\ot \Omega^q.$$
An analogous statement holds for dg coalgebras.

\item The suspension $sV$ of a homologically graded dg vector space $V$ is a dg vector space with grading given by $(sV)_n = V_{n-1}$ and with differential given by  $d(sa)= -sd(a)$.

\item If $\h$ is a dg Lie algebra, we define its $n$-connected cover $\h\la n\ra\subseteq \h$ to be the dg Lie subalgebra given by 
$$\h\la n \ra_i=\left\{
\begin{array}{ll}
\h_i&\quad\text{if }i>n,\\
\mathrm{ker}(\h_n\xrightarrow d \h_{n-1})&\quad\text{if }i=n,\\
0&\quad\text{if }i<n.
\end{array}
\right.$$
We say that $\h$ is connected if $\h=\h\la 0\ra$ and we say that $\h$ is simply connected if $\h=\h\la 1\ra$.

\item Given a dg Lie algebra $(L,d)$, let $(\Der(L),D)$ denote the dg Lie algebra of derivations on $L$. We remind the reader that a derivation on $L$ is a linear map $\theta\colon L\to L$ that satisfy the equality $\theta[x,y] = [\theta(x),y] + (-1)^{|\theta||x|}[x,\theta(y)]$. The Lie bracket and the differential on $\Der(L)$ are given by
$$[\theta,\varphi] = \theta\circ \varphi - (-1)^{|\theta||\varphi|}\varphi\circ\theta,\qquad D(\theta)= d\circ\theta-(-1)^{|\theta|}\theta\circ d.$$

\item The connected component of the identity map in $\aut_*(X)$ and $\aut_A(X)$ are denoted by $\aut_{*,\circ}(X)$ and $\aut_{A,\circ}(X)$ respectively. The connected component of the inclusion map $\iota\colon A\hookrightarrow X$ in $\map_*(A,X)$ is denoted by $\map_*^\iota(A,X)$.
 \end{itemize}

\subsection{Strategy for the proof}\label{SubSec:Strategy}
We observe that the universal cover of $B\aut_A(X)$ is homotopy equivalent to $B\aut_{A,\circ}(X)$ (if $G$ is a topological group and $G^\circ$ is the connected component of the identity, then  $BG^\circ\to BG\to B\pi_{0}(G)\cong K(\pi_0(G),1)$ is equivalent to a fibration, giving that\break $BG^\circ\to BG$ induces  isomorphisms $\pi_k(BG^\circ)\xrightarrow\cong\pi_k(BG)$ for $k\geq 2$, which implies that $BG^\circ\simeq \widetilde{BG}$). 

In Section \ref{gbc}, we show that $B\aut_{A,\circ}(X)\simeq B(*,\aut_{*,\circ}(X),\map_*^\iota(A,X))$, where the right-hand side is the geometric bar construction of $\aut_{*,\circ}(X)$ and the left $\aut_{*,\circ}(X)$-space  $\map_*^\iota(A,X)$. The rational homotopy type of $B\aut_{*,\circ}(X)$ and $\map_*^\iota(A,X)$ are known and the identification of  $B\aut_{A,\circ}(X)$ with the geometric bar construction above gives us a way of expressing the Lie model for  $B\aut_{A,\circ}(X)$ in terms of the Lie models for $B\aut_{*,\circ}(X)$ and $\map_*^\iota(A,X)$. 

Briefly,  if a  grouplike monoid $G$ acts on $X$ from the left, then $B(*,G,X)$ is modelled by a twisted semidirect product $\g\ltimes_\xi L$ where $\g$ is a Lie model for $BG$ and $L$ is a Lie model for $X$. This is treated in Section \ref{RHTgrpAction}.

In Section \ref{der} we apply the theory of Section \ref{RHTgrpAction} to $$B\aut_{A,\circ}(X)\simeq B(*,\aut_{*,\circ}(X),\map_*^\iota(A,X)),$$ and get that a Lie model for $B\aut_{A,\circ}(X)$ is given by a twisted semidirect product  $\Der(\L_X)\la1\ra\ltimes_{\tau_*}\Hom^\tau(\bc(\L_A),\L_X)\la0\ra$ of the Lie model for $B\aut_{{*,\circ}}(X)$ and the Lie model for $\map_*^\iota(A,X)$.  We also prove that $\Der(\L_X)\la1\ra\ltimes_{\tau_*}\Hom^\tau(\bc(\L_A),\L_X)\la0\ra\simeq \Der(\L_X\|\L_A)\la1\ra$, which completes the proof of the main theorem, Theorem \ref{mainThm}.

\section{The Geometric Bar Construction}\label{gbc}
The geometric bar construction,  introduced by May (see \cite{mayiterated,may75}), is a construction that generalizes the classifying space functor. May forms a category $\mathscr K$, whose objects are triples $(X,G,Y)$, where $G$ is a topological monoid and $X$ and $Y$ are right and left $G$-spaces, respectively. A morphism between two objects, $(X,G,Y)$ and $(X',G',Y')$, in $\mathscr K$ is a triple $(i,f,j)$ where $f\colon G\to G'$ is a map of monoids, and $i\colon X\to X'$ and $j\colon Y\to Y'$ are equivariant with respect to $f$. We say that $(i,f,j)$ is a weak equivalence if $i$, $f$ and $j$ are weak equivalences, and two objects in $\mathscr K$ are called weakly equivalent if there is a zig-zag of weak equivalences connecting these two objects.  The geometric bar construction $B(X,G,Y)$ on a triple $(X,G,Y)$ in $\mathscr K$ is a topological space and defines a functor from $\mathscr K$ to the category of topological spaces.

We recall some classical facts about classifying spaces of  grouplike monoids (recall that a topological monoid $G$ is called grouplike if $\pi_0(G)$ is a group). The classifying space of a grouplike monoid $G$ is a space $BG$ that classifies all principal $G$-bundles in the following sense: The set of isomorphism classes of  principal $G$-bundles over a space $X$, is in one-to-one correspondence with the set $[X,BG]$ of homotopy classes of maps from $X$ to $BG$. 

A classifying space $BG$  of a grouplike monoid $G$ may also be recognized as a homotopy orbit space $EG\sslash G$ where $EG$ is any contractible space on which $G$ acts freely on from the left. Note that classifying spaces are only unique up to homotopy equivalences.

We also recall from \cite{may75} that the `homotopy-correct' definition of the left coset space  $G/H$ associated to an inclusion of monoids $H\subset G$ is given by 
$$G/H = B(G,H,*).$$

We  list some of the properties related to the geometric bar construction that are relevant for this paper.

\begin{prop}[\text{\cite{may75}}]\label{barProperties} Let $G$ be a topological monoid.
\begin{enumerate}[label=(\alph*)]
\item $BG=B(*,G,*)$ is a classifying space of $G$.
\item $B(X,*,*)$ is homeomorphic to $X$.
\item If $(X,G,Y)$ and $(X',G',Y')$ are weakly equivalent in $\mathscr K$ then $B(X,G,Y)$ and $B(X',G',Y')$ are weakly equivalent as spaces.
\item If $G$ is a grouplike monoid, then  $EG=B(*,G,G)$ is a contractible space on which $G$ acts freely from the right. 
\item If $G$ is a grouplike monoid, and $Y$ is a left $G$-space, then $B(*,G,Y)\simeq EG\times_G Y$.
\item If $H\subset G$ is a inclusion of grouplike monoids then $BH\simeq B(*,G,G/H)$
\end{enumerate}
\end{prop}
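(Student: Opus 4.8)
The plan is to derive all six statements from the simplicial model of the two-sided bar construction, $B(X,G,Y)=\lvert[n]\mapsto X\times G^n\times Y\rvert$, with faces and degeneracies assembled from the multiplication and unit of $G$ and the two actions, together with May's foundational results in \cite{may75,mayiterated}. Statements (a) and (b) are essentially definitional: $B(*,G,*)$ is May's classifying space, and for grouplike $G$ it carries the universal principal $G$-bundle, which yields the bijection between $[X,BG]$ and the set of isomorphism classes of principal $G$-bundles over $X$ recalled above; and when $G=*$ the simplicial space $[n]\mapsto X$ is constant, so its realization is canonically homeomorphic to $X$.

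For (d) I would note that the simplicial space $[n]\mapsto G^n\times G$ underlying $B(*,G,G)$ admits an extra degeneracy --- insert the unit of $G$ into the last slot --- hence is simplicially contractible, so $EG$ is contractible; the right $G$-action is translation in the last coordinate, which is levelwise free, and passing to the realization exhibits $EG$ as a contractible free right $G$-space. For (c) I would invoke May's theorem that geometric realization sends a levelwise weak equivalence of ``proper'' simplicial spaces to a weak equivalence: under the standing assumptions --- CW homotopy types, and the structure maps of the monoids and modules in play being cofibrations --- the simplicial spaces $B_\bullet(X,G,Y)$ are proper, so a weak equivalence $(i,f,j)$ of triples realizes to a weak equivalence $B(i,f,j)$, and a zig-zag of these realizes to a zig-zag of weak equivalences.

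For (e), geometric realization commutes with the orbit functor, so there is a natural homeomorphism
\[
EG\times_G Y=B(*,G,G)\times_G Y\;\cong\;\bigl\lvert[n]\mapsto (G^n\times G)\times_G Y\bigr\rvert\;\cong\;\bigl\lvert[n]\mapsto G^n\times Y\bigr\rvert=B(*,G,Y),
\]
where the middle step uses $G\times_G Y\cong Y$; for grouplike $G$ the left-hand side is the Borel construction, which gives the asserted equivalence. For (f), recall that $G/H=B(G,H,*)$ and use the associativity of the bar construction in the form $B(*,G,B(G,H,*))\cong B(B(*,G,G),H,*)=B(EG,H,*)$ (both sides being the diagonal realization of the bisimplicial space $(m,n)\mapsto G^m\times G\times H^n$). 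Since $B(X,H,*)\cong X\times_H B(H,H,*)$ with $B(H,H,*)$ a contractible free left $H$-space, $B(X,H,*)$ is a model for the homotopy orbit space $X\sslash H$; this identifies $B(*,G,G/H)$ with $EG\sslash H$, and because $H\subset G$ acts freely on the contractible space $EG$, the recognition principle for classifying spaces recalled above identifies $EG\sslash H$ with $BH$.

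The only genuine point to watch is the bookkeeping behind (c): one must make sure to use the homotopy-invariant (``fat'') version of the bar construction and to check that every triple of spaces entering the later arguments --- assembled from mapping spaces, homotopy-automorphism monoids, and iterated bar constructions --- satisfies the properness/cofibrancy hypotheses under which realization is homotopy invariant, so that (c) can be invoked freely in the sequel. Everything else is a matter of unwinding the simplicial definitions and citing \cite{may75}.
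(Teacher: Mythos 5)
The paper gives no proof of this proposition at all --- it is quoted verbatim from May's memoir \cite{may75} --- so there is nothing internal to compare against. Your sketch is a correct reconstruction of the standard simplicial arguments from that reference (extra degeneracy for the contractibility of $EG$, realization commuting with quotients for (e), the bisimplicial/associativity identification $B(*,G,B(G,H,*))\cong B(EG,H,*)$ for (f), and properness for the homotopy invariance in (c)), and you rightly flag properness as the one hypothesis that must be verified for the triples used later in the paper.
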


Applying Proposition \ref{barProperties} \textit{(f)} to $\aut_{A,\circ}(X)\subset\aut_{*,\circ}(X)$ we get that $$B\aut_{A,\circ}(X) \simeq B(*,\aut_{*,\circ}(X),\aut_{*,\circ}(X)/\aut_{A,\circ}(X))$$
(note that since $A\subset X$ is a cofibration, any homotopy automorphism of $X$ that fixes $A$ has a homotopy inverse that also fixes $A$ (see \cite[Section 6.5]{mayConcise}), which makes  $\aut_A(X)$ into a grouplike monoid).

\begin{lemma}\label{mapIsQuotient}
There is a weak equivalence of left $\aut_{*,\circ}(X)$-spaces
$$\map_*^\iota(A,X)\simeq \aut_{*,\circ}(X)/\aut_{A,\circ}(X).$$
\end{lemma}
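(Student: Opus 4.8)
The plan is to identify both sides with the same homotopy orbit space, or more precisely to exhibit a fibration sequence whose total space is contractible and whose base is each of the two spaces in turn. The natural map to use is the restriction-to-$A$ evaluation: since a homotopy automorphism of $X$ that fixes $A$ restricts to the inclusion $\iota\colon A\hookrightarrow X$, there is an $\aut_{*,\circ}(X)$-equivariant map
$$
q\colon \aut_{*,\circ}(X)\longrightarrow \map_*^\iota(A,X), \qquad \varphi\longmapsto \varphi\circ\iota,
$$
which lands in the path component $\map_*^\iota(A,X)$ of $\iota$ precisely because $\aut_{*,\circ}(X)$ is the identity component. The left $\aut_{*,\circ}(X)$-action here is by post-composition, matching the action on the quotient $\aut_{*,\circ}(X)/\aut_{A,\circ}(X)$. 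First I would verify that $q$ is a quasifibration (or, after replacing it by an equivalent fibration, a Hurewicz fibration); this uses that $A\subset X$ is a cofibration, so that restriction $\map_*(X,X)\to\map_*(A,X)$ is a fibration, and $q$ is obtained from it by restricting over suitable path components. The fibre of $q$ over $\iota$ is exactly the space of based self-maps of $X$ that restrict to $\iota$ on $A$ and lie in the identity component, i.e.\ $\aut_{A,\circ}(X)$; one checks the homotopy-automorphism (invertibility) condition is automatic on the relevant component because $\pi_0$ of the fibre injects compatibly.

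With the fibration $\aut_{A,\circ}(X)\to \aut_{*,\circ}(X)\xrightarrow{q}\map_*^\iota(A,X)$ in hand, the next step is to compare it with May's model for the coset space. By Proposition \ref{barProperties}, the projection $B(*,\aut_{*,\circ}(X),\aut_{*,\circ}(X))\to B(*,\aut_{*,\circ}(X),\aut_{*,\circ}(X)/\aut_{A,\circ}(X))$ is a quasifibration with contractible total space $E\aut_{*,\circ}(X)$ and fibre $\aut_{A,\circ}(X)$ — this is the defining property (d)–(f) of the geometric bar construction applied to the inclusion $\aut_{A,\circ}(X)\subset\aut_{*,\circ}(X)$. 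So both $\map_*^\iota(A,X)$ and $\aut_{*,\circ}(X)/\aut_{A,\circ}(X)$ receive an $\aut_{*,\circ}(X)$-equivariant map from a free contractible (up to the relevant homotopical bookkeeping) $\aut_{*,\circ}(X)$-object with fibre $\aut_{A,\circ}(X)$. The cleanest way to conclude is: apply $B(*,\aut_{*,\circ}(X),-)$ to the equivariant map $q$, getting $E\aut_{*,\circ}(X)\times_{\aut_{*,\circ}(X)}\aut_{*,\circ}(X)\to E\aut_{*,\circ}(X)\times_{\aut_{*,\circ}(X)}\map_*^\iota(A,X)$; the left side is contractible, and a comparison of the two resulting fibration sequences over $B\aut_{*,\circ}(X)$, both with fibre $\aut_{A,\circ}(X)$, shows via the five lemma on long exact sequences of homotopy groups that $q$ induces the desired weak equivalence $\aut_{*,\circ}(X)/\aut_{A,\circ}(X)=B(\aut_{*,\circ}(X),\aut_{A,\circ}(X),*)\simeq \map_*^\iota(A,X)$, equivariantly.

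The main obstacle I expect is bookkeeping around quasifibrations versus genuine fibrations and making the equivalence honestly $\aut_{*,\circ}(X)$-equivariant rather than merely an equivalence of spaces. Concretely: the restriction map $\map_*(X,X)\to\map_*(A,X)$ is a Hurewicz fibration by the cofibration hypothesis, but passing to the monoid of homotopy automorphisms and to specified path components requires checking that $\aut_{A,\circ}(X)$ really is the full fibre over $\iota$ (no extra components of invertible-up-to-homotopy maps get in or out), and that the grouplike condition — guaranteed for $\aut_A(X)$ by \cite[Section 6.5]{mayConcise} as already noted — interacts correctly with taking identity components. Once the fibration is set up equivariantly over the correct components, the comparison argument is formal, so I would concentrate the technical effort there.
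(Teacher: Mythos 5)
Your first paragraph reproduces the paper's argument exactly: the restriction map $\mathrm{res}_A\colon\aut_{*,\circ}(X)\to\map_*^\iota(A,X)$, $\varphi\mapsto\varphi\circ\iota$, is a fibration because $A\subset X$ is a cofibration, its fibre over $\iota$ is $\aut_{A,\circ}(X)$, and \cite[Proposition 7.9]{may75} makes $\aut_{*,\circ}(X)\to \aut_{*,\circ}(X)/\aut_{A,\circ}(X)=B(\aut_{*,\circ}(X),\aut_{A,\circ}(X),*)$ a quasifibration with the same fibre. Where the paper differs is the concluding comparison, and there your proposal has a concrete problem. Writing $G=\aut_{*,\circ}(X)$, $H=\aut_{A,\circ}(X)$: the two sequences you propose to compare over $BG$, coming from $EG\times_G G$ and $EG\times_G\map_*^\iota(A,X)$, have fibres $G$ and $\map_*^\iota(A,X)$ respectively, not $H$, so the phrase ``both with fibre $\aut_{A,\circ}(X)$'' is false as stated and the five lemma applied to those long exact sequences would require already knowing that the middle map $EG\times_G(G/H)\to EG\times_G\map_*^\iota(A,X)$ is an equivalence --- which is essentially equivalent to what you are trying to prove. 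Moreover, applying $B(*,G,-)$ to the left-equivariant map $\mathrm{res}_A$ produces a map out of $EG\times_G G\simeq EG$; it does not produce the map $G/H\to\map_*^\iota(A,X)$ whose existence the statement needs.

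The missing idea is to use the \emph{right} $H$-action: $\mathrm{res}_A$ is invariant under precomposition by elements of $H$, so $(\mathrm{res}_A,*,*)\colon (G,H,*)\to(\map_*^\iota(A,X),*,*)$ is a morphism in $\mathscr K$, and $B(\mathrm{res}_A,*,*)$ gives a map $G/H=B(G,H,*)\to B(\map_*^\iota(A,X),*,*)=\map_*^\iota(A,X)$ through which $\mathrm{res}_A$ factors. This yields a strictly commuting ladder between the quasifibration $H\to G\to G/H$ and the fibration $H\to G\to\map_*^\iota(A,X)$ which is the identity on fibres and total spaces, and the five lemma applied to these two long exact sequences gives the weak equivalence directly; left $G$-equivariance is then immediate from the construction. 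This is the paper's route, and once you add the factorization through $B(G,H,*)$ your Borel-construction detour becomes unnecessary.
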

\begin{proof} We will throughout this proof use that $X$ and $B(X,*,*)$ are interchangeable. We have that
\begin{itemize} 
\item The  map $\aut_{*,\circ}(X)\to \aut_{*,\circ}(X)/\aut_{A,\circ}(X)$ given by $$B(\id,*,*)\colon B(\aut_{*,\circ}(X),*,*)\to B(\aut_{*,\circ}(X),\aut_{A,\circ}(X),*)$$ is a quasifibration with fiber $\aut_{A,\circ}(X)$ (see \cite[Proposition 7.9]{may75}).

\item The restriction map $\mathrm{res}_A\colon\aut_{*,\circ}(X)\to\map_*^\iota(A,X)$ is a fibration, since the functor $\map_*(-,X)$ turns cofibrations into fibrations.
\item The restriction map $\mathrm{res}_A\colon\aut_{*,\circ}(X)\to\map_*^\iota(A,X)$ is invariant under the right action of $\aut_{A,\circ}(X)$ on $\aut_{*,\circ}(X)$ and therefore  the triple $$(\mathrm{res}_A,*,*)\colon (\aut_{*,\circ}(X),\aut_{A,\circ}(X),*)\to (\map_*^\iota(A,X),*,*)$$ defines a map in $\mathscr K$. Thus $$B(\mathrm{res}_A,*,*)\colon B(\aut_{*,\circ}(X),\aut_{A,\circ}(X),*)\to B(\map_*^\iota(A,X),*,*)$$ is a well-defined map.
\end{itemize}
It follows that the restriction map $\aut_{*,\circ}(X)\to \map_*^\iota(A,X)$ factors through $\aut_{*,\circ}(X)/\aut_{A,\circ}(X)$. Hence, there is a commutative diagram with rows being quasifibrations:
$$\xymatrix{\aut_{A,\circ}(X)\ar@{=}[d]\ar[r]^{incl} &\aut_{*,\circ}(X)\ar@{=}[d]\ar[r]&\aut_{*,\circ}(X)/\aut_{A,\circ}(X)\ar[d]\\
\aut_{A,\circ}(X)\ar[r]^{incl} &\aut_{*,\circ}(X)\ar[r]&\map_*^\iota(A,X)}$$
By the functoriality of the  long exact sequence of homotopy groups associated to a quasifibration and by the five lemma, it follows that there is a weak equivalence of spaces $\map_*^\iota(A,X)\simeq \aut_{*,\circ}(X)/\aut_{A,\circ}(X)$.

Moreover the restriction map $\aut_{*,\circ}(X)/\aut_{A,\circ}(X)\to \map_*^\iota(A,X)$ respects the left $\aut_{*,\circ}(X)$-action. This completes the proof.
\end{proof}

\begin{cor}\label{BautA} Let $A\subset X$ be cofibration.
There is a weak equivalence of spaces 
$$B\aut_{A,\circ}(X)\simeq B(*,\aut_{*,\circ}(X),\map_*^\iota(A,X)).$$ 
\end{cor}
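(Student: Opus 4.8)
The plan is simply to chain together the two facts already established. Applying Proposition~\ref{barProperties}(f) to the inclusion of grouplike monoids $\aut_{A,\circ}(X)\subset\aut_{*,\circ}(X)$ — legitimate because, as noted above, the cofibration hypothesis on $A\subset X$ makes $\aut_{A,\circ}(X)$ into a grouplike monoid — yields a weak equivalence
$$B\aut_{A,\circ}(X)\simeq B(*,\aut_{*,\circ}(X),\aut_{*,\circ}(X)/\aut_{A,\circ}(X)).$$
Lemma~\ref{mapIsQuotient} then supplies a weak equivalence $\aut_{*,\circ}(X)/\aut_{A,\circ}(X)\xrightarrow{\ \sim\ }\map_*^\iota(A,X)$ which respects the left $\aut_{*,\circ}(X)$-action. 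Consequently the triple $(\id,\id,\mathrm{res}_A)$ is a morphism
$$(*,\aut_{*,\circ}(X),\aut_{*,\circ}(X)/\aut_{A,\circ}(X))\longrightarrow(*,\aut_{*,\circ}(X),\map_*^\iota(A,X))$$
in $\mathscr K$, all three of whose components are weak equivalences. Invoking the homotopy invariance of the geometric bar construction, Proposition~\ref{barProperties}(c), one concludes
$$B\aut_{A,\circ}(X)\simeq B(*,\aut_{*,\circ}(X),\map_*^\iota(A,X)),$$
as desired.

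I expect essentially no obstacle at this stage: the substantive content has already been packaged into Lemma~\ref{mapIsQuotient} (and, behind it, into the identification $G/H=B(G,H,*)$ together with the quasifibration comparison via the five lemma). The only point that genuinely has to be confirmed while assembling the argument is that the weak equivalence produced by Lemma~\ref{mapIsQuotient} is compatible with the left $\aut_{*,\circ}(X)$-actions, so that it defines a morphism in $\mathscr K$ rather than merely a weak equivalence of underlying spaces; this compatibility is part of the statement of that lemma. If one preferred to be conservative and extract from Lemma~\ref{mapIsQuotient} only a zig-zag of equivariant weak equivalences rather than a single equivariant map, the conclusion would still follow verbatim, since by the definition of weak equivalence in $\mathscr K$ and Proposition~\ref{barProperties}(c) a zig-zag of weak equivalences of triples induces a zig-zag of weak equivalences of their bar constructions.
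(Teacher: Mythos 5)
Your proposal is correct and follows exactly the route the paper takes: the paper's proof is the one-line observation that the corollary is an immediate consequence of Proposition~\ref{barProperties}~\textit{(c)}, \textit{(f)} and Lemma~\ref{mapIsQuotient}, which is precisely the chain of steps you spell out. The only difference is that you make explicit the (correct) point that the equivalence from Lemma~\ref{mapIsQuotient} must be $\aut_{*,\circ}(X)$-equivariant to define a morphism in $\mathscr K$.
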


\begin{proof} This is an immediate consequence of Proposition \ref{barProperties} \textit{(c), (f)} and Lemma \ref{mapIsQuotient}. \end{proof}

\begin{prop}\label{Prop:RatGBC}
The rationalization of $B\aut_{A,\circ}(X)$ is  given by
$$B\aut_{A_\Q,\circ}(X_\Q)\simeq B(*,\aut_{*,\circ}(X_\Q),\map_*^{\iota_\Q}(A_\Q,X_\Q))$$
\end{prop}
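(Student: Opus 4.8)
The plan is to deduce the statement from Corollary \ref{BautA}, the known rational homotopy types of $B\aut_{*,\circ}(X)$ and of $\map_*^\iota(A,X)$, and the naturality of the geometric bar construction. Observe first that the asserted weak equivalence $B\aut_{A_\Q,\circ}(X_\Q)\simeq B(*,\aut_{*,\circ}(X_\Q),\map_*^{\iota_\Q}(A_\Q,X_\Q))$ is just Corollary \ref{BautA} applied to the pair $A_\Q\subset X_\Q$: after replacing the rationalization map $A_\Q\to X_\Q$ by a cofibration (which does not affect homotopy types), $X_\Q$ is pointed simply connected, $A_\Q$ is a simply connected subspace containing the basepoint, and $\aut_{A_\Q}(X_\Q)$ is grouplike by the same argument as in the unrationalized case. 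So the real content is the first assertion, that the natural map $B\aut_{A,\circ}(X)\to B\aut_{A_\Q,\circ}(X_\Q)$ is a rationalization.

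To make sense of this, I would first record that $B\aut_{A,\circ}(X)$ is simply connected of finite $\Q$-type. It is simply connected since $\pi_1(B\aut_{A,\circ}(X))=\pi_0(\aut_{A,\circ}(X))=\ast$, and it is of finite $\Q$-type because the restriction fibration $\aut_{A,\circ}(X)\to\aut_{*,\circ}(X)\xrightarrow{\mathrm{res}_A}\map_*^\iota(A,X)$ from the proof of Lemma \ref{mapIsQuotient} squeezes the rational homotopy of $\aut_{A,\circ}(X)$, via its long exact sequence, between the degreewise finite-dimensional rational homotopy groups of $\aut_{*,\circ}(X)$ and of $\map_*^\iota(A,X)$ (the latter being finite-dimensional in each degree because $A$ has the homotopy type of a finite complex and $X$ is simply connected of finite type). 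The same applies to $B\aut_{A_\Q,\circ}(X_\Q)$, so both spaces have well-behaved rationalizations.

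Now the rationalizations $A\to A_\Q$ and $X\to X_\Q$ are compatible with the inclusions, so they induce a morphism $(*,\aut_{*,\circ}(X),\map_*^\iota(A,X))\to(*,\aut_{*,\circ}(X_\Q),\map_*^{\iota_\Q}(A_\Q,X_\Q))$ in $\mathscr K$, and hence, via $B(-)$ and Corollary \ref{BautA}, a map $B\aut_{A,\circ}(X)\to B\aut_{A_\Q,\circ}(X_\Q)$. By the naturality of the fibration sequence $Y\to B(*,G,Y)\to BG$ of Proposition \ref{barProperties}(e), this map fits into a morphism of fibration sequences with map of fibers $\map_*^\iota(A,X)\to\map_*^{\iota_\Q}(A_\Q,X_\Q)$ and map of bases $B\aut_{*,\circ}(X)\to B\aut_{*,\circ}(X_\Q)$. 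I then invoke the two known results: the map of bases is a rationalization -- this is the classical rational model for based homotopy automorphisms, cf.\ \cite{tanre83} and \cite{BM14} -- and the map of fibers is a rationalization -- this is the standard fact that for $A$ a finite complex and $X$ nilpotent of finite type, rationalization commutes with passing to a component of $\map_*(A,X)$, combined with $\map_*(A,X_\Q)\simeq\map_*(A_\Q,X_\Q)$ since $X_\Q$ is $\Q$-local. Since the bases are simply connected and everything is of finite $\Q$-type, a morphism of fibration sequences that is a rationalization on base and fiber is a rationalization on total spaces -- one checks this by comparing the long exact sequences of rational homotopy groups and applying the five lemma, using that the total spaces are simply connected. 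This proves the first assertion and hence the proposition.

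The step I expect to require the most care is the identification of the fiber map $\map_*^\iota(A,X)\to\map_*^{\iota_\Q}(A_\Q,X_\Q)$ as a rationalization, since one must know both that the relevant component of $\map_*(A,X)$ is nilpotent and that rationalization commutes with this mapping-space construction; this is exactly where the finiteness hypothesis on $A$ and the finite-type hypothesis on $X$ are used, and where one has to be precise about which known theorem on rational models of mapping spaces is being quoted. The comparison of the fibration sequences and the five-lemma conclusion are then formal.
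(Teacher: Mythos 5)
Your argument is correct and is, in substance, the argument the paper delegates to the literature: the paper's own proof is a one-line instruction to ``dualize'' the proof of \cite[Lemma 3.1]{berglund17}, and that dualization is precisely your comparison of the two bar-construction fibration sequences $\map_*^\iota(A,X)\to B(*,\aut_{*,\circ}(X),\map_*^\iota(A,X))\to B\aut_{*,\circ}(X)$, fed by the classical facts that $B\aut_{*,\circ}(X)\to B\aut_{*,\circ}(X_\Q)$ and $\map_*^\iota(A,X)\to\map_*^{\iota_\Q}(A_\Q,X_\Q)$ are rationalizations, and closed off by the five lemma over a simply connected base. No gaps; you have simply written out the details the paper leaves to the cited reference.
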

\begin{proof}
This may be obtained by `dualizing' the proof \cite[Lemma 3.1]{berglund17}.
\end{proof}

\begin{rmk}
By Proposition \ref{Prop:RatGBC} it is enough to prove the assertion in Theorem \ref{mainThm} for rational spaces $X$ and $A$ in order to get a full proof of the theorem.\end{rmk}

\section{Rational Homotopy of Grouplike Monoid Actions}\label{RHTgrpAction}

\subsection{Preliminaries: Degree-wise nilpotency and completness of dg Lie algebras}\label{Subsec:Prelim}
Nilpotent spaces are modelled by the so called degree-wise nilpotent dg Lie algebras.
\begin{dfn}
The lower central series of a dg Lie algebra $L$ is the descending filtration
$$L=\Gamma^0L\supseteq \Gamma^1L\supseteq \Gamma^2L\supseteq \cdots,$$
where $\Gamma^0L=L$ and $\Gamma^{k+1}L =[\Gamma^kL,L]$. We say that $L$ is degree-wise nilpotent if for every $n\in\Z$ there exists some $k$ such that $(\Gamma^kL)_n=0$.
\end{dfn}

\begin{dfn}
Let $\Omega_\bullet$ denote the simplicial commutative dg algebra in which $\Omega_n$ is the Sullivan-de Rham algebra of polynomial differential forms on the $n$-simplex, see \cite[Section 10 (c)]{felixrht}. The geometric realization of a degree-wise nilpotent  dg Lie algebra $L$,  is defined to be the simplicial set $\MC(L\ot\Omega_\bullet)$ of Maurer-Cartan elements of the simplicial dg Lie algebra $L\ot\Omega_\bullet$, denoted by $\MC_\bullet(L)$. We say that that a degree-wise nilpotent dg Lie algebra $L$ is a Lie model for a nilpotent space $X$ if there exists a rational homotopy equivalence
between the geometric realization $\MC_\bullet(L)$ and $X$.
\end{dfn}

In \cite{berglund15}, the geometric realization functor is extended to the so called complete dg Lie algebras.

\begin{dfn} A dg Lie algebra $\h$ equipped with a filtration $$\h = F^1\h\supseteq F^2\h\supseteq\cdots$$
is called complete if
\begin{itemize}
\item[(i)] each quotient $\h/F^i\h$ is a nilpotent dg Lie algebra, and
\item[(ii)] the canonical map  $\h\to\varprojlim\h/F^i\h$ is an isomorphism.
\end{itemize}
\end{dfn}

\begin{dfn}\label{Def:GeneralRealization}
Given a complete dg Lie algebra $\h$ we define its geometric realization to be the inverse limit 
$$\widehat\MC_\bullet(\h):=\varprojlim\MC_\bullet(\h/F^r\h).$$ We say that $\h$ is a Lie model for $X$ if the realization of $\h$ is rationally equivalent to $X$. 
\end{dfn}

\begin{rmk}
A degree-wise nilpotent dg Lie algebra $L$ together with its lower central series, makes $L$ into a complete dg Lie algebra, and we have that $\widehat\MC_\bullet(L)=\MC_\bullet(L)$. From this we may view the functor $\widehat\MC_\bullet$ as an extension of the functor $\MC_\bullet$. \end{rmk}

\begin{exmp}\label{Ex:Mapping} Let $C$ be a commutative dg coalgebra  concentrated in non-negative degrees, with coproduct $\Delta\colon C\to C\ot C$,
and let $L$ be a connected degree-wise nilpotent dg Lie algebra of finite type with Lie bracket $\ell\colon L\ot L\to L$. The convolution dg Lie algebra $\Hom(C,L)$ is a dg Lie algebra
with differential and Lie bracket given by
$$\partial(f) = d_L\circ f -(-1)^{|f|}f\circ d_C
$$
$$
[f,g] = \ell\circ f\ot g\circ \Delta.
$$
The convolution dg Lie algebra together $\Hom(C,L)$ with the filtration 
$$\Hom(C,L)\supseteq \Hom(C,L\la1\ra)\supseteq \Hom(C,L\la2\ra)\supseteq\cdots$$ is a complete dg Lie algebra. 
\end{exmp}

\subsection{Outer Actions and Exponentials}
We start by recalling some of the background for the notion of outer actions, as discussed in \cite{berglund17}. By the theory of Schlessinger-Stasheff \cite{SchlessingerStasheff} and Tanré \cite{tanre83}, we have that if $L$ is a cofibrant Lie model for $X$, then  a Lie model for the universal cover of $B\aut(X)$, or equivalently, a Lie model for $B\aut_\circ(X)$ where $\aut_\circ(X)$ is  the connected component of the identity map (see the beginning of Section \ref{SubSec:Strategy} for a motivation for this equivalence), is given by the semidirect product $\Der(L)\la1\ra\ltimes^{\ad}sL$ where $\Der(L)\la1\ra$ is the 1-connected cover of the dg Lie algebra of derivations on $L$, and $sL$ the abelian dg Lie algebra with the underlying dg vector space structure given by the suspension of $L$. The differential on the semidirect product is twisted by the adjoint map $\ad\colon sL\to \Der(L)\la1\ra$, $sl\mapsto \ad_l=[l,-]$. That is,  $\Der(L)\la1\ra\ltimes^{\ad}sL$ is a dg Lie algebra with bracket and differential given by
$$[(\theta,sx),(\varphi,sy)] = ([\theta,\varphi],(-1)^{|\theta|}s\theta(y)-(-1)^{|\varphi||x|}s\varphi(x))
$$
and
$$\partial(\theta,sx)= (D(\theta)+\ad_x,-sdx).$$

The set of homotopy classes of maps from a simply connected dg Lie algebra $\g$ to $\Der(L)\la1\ra\ltimes^{\ad}sL$  is thus in bijection with equivalence classes of $\MC_\bullet(L)$-fibrations over $\MC_\bullet(\g)$ in the category of simply connected rational spaces.
Given a map  $\psi\colon \g\to \Der(L)\la1\ra\ltimes^{\ad}sL$, the composition of $\psi$ with the projection on $\Der(L)\la1\ra$ gives a map $\g\to\Der(L)\la1\ra$ which induces a map of graded vector spaces $\alpha\colon \g\ot L\to L$ of degree 0 (this is not necessarily a chain map), and the composition of $\psi$ with the projection on $sL$ gives a map $\g\to sL$ which is equivalent to having a map $\xi\colon \g \to L$ of degree $-1$. These two maps encode a so called outer action of $\g$ on $L$.

%A given action of $G$ on $X$ is defined by a map of monoids $G\to \aut(X)$. Applying  classifying space functor to this map, we get a map $BG\to B\aut(X)$, which is modelled in the category of dg Lie algebras by a map $\psi\colon \g\to \Der(L)\la1\ra\ltimes^{\ad}sL$. The composition of $\psi$ with the projection on $\Der(L)\la1\ra$ gives a map $\g\to\Der(L)\la1\ra$ which induces a map of graded vector spaces $\alpha\colon \g\ot L\to L$ of degree 0 (this is not necessarily a chain map). The composition of $\psi$ with the projection on $sL$ gives a map $\g\to sL$ which is equivalent to having a map $\xi\colon \g \to L$ of degree $-1$. These two maps encode rationally the action of $G$ on $X$ and defines a so called outer action of $\g$ on $L$. 

\begin{dfn}[\text{\cite{berglund17}}]\label{DefOuterAct}
 An outer action  of $\mathfrak g$ on $L$ consists of a pair of maps $(\alpha,\xi)$, where $\alpha\colon \mathfrak g\ot L\to L$ is a map of degree 0 and $\alpha(x\ot a)$ is denoted by $x.a$, and where $\xi\colon \mathfrak g \to L$ is a map of degree $-1$, such that $\alpha$ and $\xi$ satisfy the following conditions
 \begin{itemize}
  \item[(I)] $[x,y].a  = x.(y.a)- (-1)^{|x||y|}y.(x.a)$
  \item[(II)] $x.[a,b] = [x.a,b]+(-1)^{|x||a|}[a,x.b]$
  \item[(III)] $\xi$ is a chain map, i.e. $d\xi=-\xi d$
  \item[(IV)] $\xi[x,y] = -(-1)^{|y||\xi(x)|} y.\xi(x)+ (-1)^{|x|}x.\xi(y)$
  \item[(V)] $d(x.a)=d(x).a + (-1)^{|x|}x.d(a)+[\xi(x),a]$
 \end{itemize}
\end{dfn}

\begin{prop}
Specifying an outer action of $\g$ on $L$ is tantamount to specifying
a morphism of dg Lie algebras $\g \to \Der(L)\la1\ra \ltimes^{\ad} sL$.
\end{prop}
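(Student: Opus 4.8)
The plan is to unwind both sides of the claimed correspondence and match them term by term. Given a morphism $\psi\colon\g\to\Der(L)\la1\ra\ltimes^{\ad}sL$, write $\psi(x)=(\theta_x,s\eta_x)$; define $\alpha(x\ot a)=x.a:=\theta_x(a)$ and $\xi(x):=\eta_x$. Conversely, given an outer action $(\alpha,\xi)$, set $\theta_x:=\alpha(x\ot-)$ and $\psi(x):=(\theta_x,s\xi(x))$. The content of the proposition is then that $\psi$ is a morphism of dg Lie algebras if and only if $(\alpha,\xi)$ satisfies conditions (I)--(V).

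First I would check that $\psi(x)$ actually lands in $\Der(L)\la1\ra\ltimes^{\ad}sL$: the derivation property of $\theta_x$ on $L$ is precisely condition (II), and the requirement that $\theta_x$ have degree $0$ (hence lie in $\Der(L)_0$, which is inside the $1$-connected cover only after passing to the whole semidirect product — here one uses that the extra generators $sL$ fill in degrees $\le 1$, exactly as in the Schlessinger--Stasheff--Tanré model) together with $\xi$ of degree $-1$ matches the grading conventions; I would remark that $\g$ being simply connected is what makes this bookkeeping consistent. Next, I would impose that $\psi$ commutes with brackets. Expanding $[\psi(x),\psi(y)]=\psi[x,y]$ using the bracket formula $[(\theta,sx),(\varphi,sy)]=([\theta,\varphi],(-1)^{|\theta|}s\theta(y)-(-1)^{|\varphi||x|}s\varphi(x))$: the $\Der(L)$-component gives $[\theta_x,\theta_y]=\theta_{[x,y]}$, i.e. $\theta_x\theta_y-(-1)^{|x||y|}\theta_y\theta_x=\theta_{[x,y]}$, which applied to $a$ is condition (I); the $sL$-component gives $(-1)^{|x|}s\theta_x(\xi(y))-(-1)^{|y||\xi(x)|}s\theta_y(\xi(x))=s\xi[x,y]$, which (after desuspending and using $|\theta_x|=|x|$) is condition (IV). Then I would impose that $\psi$ is a chain map: using $\partial(\theta,sx)=(D(\theta)+\ad_x,-sdx)$, the $sL$-component of $\partial\psi(x)=\psi(\partial x)$ yields $-sd\xi(x)=s\xi(dx)$ up to sign, i.e. condition (III); the $\Der(L)$-component yields $D(\theta_x)+\ad_{\xi(x)}=\theta_{dx}$, and evaluating $D(\theta_x)(a)=d\theta_x(a)-(-1)^{|x|}\theta_x(da)$ and $\ad_{\xi(x)}(a)=[\xi(x),a]$ gives exactly condition (V).

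Since every defining equation of ``morphism of dg Lie algebras'' has been matched bijectively with one of (I)--(V) and the two constructions $\psi\leftrightarrow(\alpha,\xi)$ are mutually inverse by construction, the correspondence is established. The main obstacle I anticipate is purely clerical: tracking the Koszul signs correctly through the suspension isomorphism $sL\cong L[1]$ (recall $d(sa)=-sd(a)$ and $|s\xi(x)|=|\xi(x)|+1=|x|$ when $\xi$ has degree $-1$), so that the sign $(-1)^{|\theta|}=(-1)^{|x|}$ appearing in the semidirect-product bracket and differential reproduces the signs $(-1)^{|x|}$ and $(-1)^{|y||\xi(x)|}$ in (IV) and (V) on the nose rather than up to a global sign ambiguity. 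I would organize the proof so that the sign conventions are fixed once (via the suspension functor as defined in the standing notation) and then simply ``read off'' each condition, rather than recomputing signs five times.
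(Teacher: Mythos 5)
Your proposal is correct and is exactly the term-by-term unwinding that the paper intends: the paper omits the proof, having already sketched the correspondence $\psi\leftrightarrow(\alpha,\xi)$ in the paragraph preceding the definition, and your matching of (II) with the derivation property, (I)/(IV) with bracket compatibility in the two components, and (III)/(V) with the chain-map condition is the whole content. One small slip in your parenthetical: $\theta_x=\alpha(x\ot -)$ has degree $|x|$, not $0$ (it is $\alpha\colon\g\ot L\to L$ that has degree $0$), and the genuine subtlety you are groping for is that for $x\in\g_1$ condition (V) gives $D\theta_x=\pm\ad_{\xi(x)}$ rather than $D\theta_x=0$, so the pair $(\theta_x,s\xi(x))$ is a cycle of the semidirect product without $\theta_x$ itself being a $D$-cycle --- i.e.\ the truncation must be read on the whole semidirect product, as you correctly suspect.
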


\begin{dfn}
Given an outer action of $\g$ on $L$, the twisted semidirect product $\g\ltimes_\xi L$ of $\g$ and $L$ is a dg Lie algebra with the underlying graded vector space  given by $(\g\ltimes_\xi L)_n = \g_n\times L_n$. The Lie bracket and the differential on $\g\ltimes_\xi L$ are given  by 
$$[(x,a),(y,b)] = ([x,y], [a,b]+ x.b - (-1)^{|y||a|}y.a)$$
and
$$\partial^\xi(x,a)= (dx,da+\xi(x)).$$
\end{dfn}

%\begin{prop}[\text{\cite[Theorem 3.15]{berglund17}}]\label{Prop:Borel}
%Let $(\alpha,\xi)$ be an outer action of $\g$ on $L$ associated to a left $G$-action on $X$. The twisted semidirect product $\g\ltimes_\xi L$ is a Lie model for the Borel construction $B(*,G,X)$.
%\end{prop}

%\begin{rmk}\label{Rmk:BasePointPreserving}\textcolor{blue}{The $G$-action on $X$ is basepoint preserving if and only if the $G$-action could be modelled by an outer action in which $\xi$ is trivial. From this one may conclude that a Lie model for $B\aut_{*,\circ}(X)$ is given by $\Der(L)\la 1\ra$ (proved in \cite{SchlessingerStasheff,tanre83}).}
%\end{rmk}

Next, we associate to an outer action $(\alpha,\xi)$ of $\g$ on $L$ an action of a group $\exp_\bullet(\g)$ on the realization  $\MC_\bullet(L)$.

\begin{dfn}{\cite{berglund17}}
The exponential $\exp(\h)$ of a nilpotent Lie algebra  $\h$ concentrated in degree zero is the nilpotent group with the underlying set given by $\h$ and with multiplication given by the Campbell-Baker-Hausdorff formula. The exponential of a connected degree-wise nilpotent dg Lie algebra $\g$, $\exp_\bullet(\g)$, is defined to be the exponential  $\exp(Z_0(\g\ot \Omega_\bullet))$ of zero cycles in $\g\ot \Omega_\bullet$.
\end{dfn}

\begin{prop}\label{Prop:GaugeOuterAction}
 Let $\g$ be a simply connected dg Lie algebra and let $(\alpha,\xi)$ define an outer action of $\g$ on a dg Lie algebra $L$. The action of $\exp_\bullet(\g)$ on $\MC_\bullet(L)$ corresponding to the outer action $(\alpha,\xi)$ is given by 
 $$\exp(x).a = a +\sum_{n\geq 0}\frac{\theta_x^n (\theta_x (a)-\xi(x))}{(n+1)!}$$
where $\theta_x(a) = x.a$. 
\end{prop}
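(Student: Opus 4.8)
The plan is to exhibit this action as the restriction, to the fibre over the base point, of the ordinary gauge action of a suitable subgroup of the gauge group of the twisted semidirect product $\g\ltimes_\xi L$, and then to read off the formula by a direct substitution. Recall the classical gauge action: for a degree-wise nilpotent dg Lie algebra $\h$, an element $u\in\h_0$ of degree zero and a Maurer--Cartan element $a$, the element
$$u\cdot a \;=\; a + \sum_{n\geq 0}\frac{(\ad_u)^n}{(n+1)!}\bigl([u,a]-du\bigr)$$
is again Maurer--Cartan, and $u\mapsto(a\mapsto u\cdot a)$ defines an action of the group $\exp(\h_0)$ on $\MC(\h)$, natural in $\h$ (all series below are finite by degree-wise nilpotency, and the case of a general complete $L$ reduces to this by passing to the inverse limit defining $\widehat\MC_\bullet$).

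First I would recall that, following \cite{berglund17}, the action of $\exp_\bullet(\g)$ on $\MC_\bullet(L)$ attached to $(\alpha,\xi)$ is the one carried by the short exact sequence of dg Lie algebras $0\to L\to\g\ltimes_\xi L\xrightarrow{\pi}\g\to 0$: applying $\MC_\bullet$ produces a fibration $\MC_\bullet(L)\to\MC_\bullet(\g\ltimes_\xi L)\xrightarrow{\pi_*}\MC_\bullet(\g)$ whose fibre over the base point $0$ is $\MC_\bullet(L)$ itself, since a Maurer--Cartan element of $(\g\ltimes_\xi L)\ot\Omega_p$ with vanishing $\pi$-image is exactly a pair $(0,a)$ with $a\in\MC(L\ot\Omega_p)$; and the $\exp_\bullet(\g)$-action on this fibre is the monodromy. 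To make the monodromy explicit I observe that, for each $p$, the assignment $x\mapsto(x,0)$ identifies $Z_0(\g\ot\Omega_p)$ with a sub-Lie algebra of $\bigl((\g\ltimes_\xi L)\ot\Omega_p\bigr)_0$ --- indeed $[(x,0),(y,0)]=([x,y],0)$ --- hence identifies $\exp_\bullet(\g)=\exp\bigl(Z_0(\g\ot\Omega_\bullet)\bigr)$ with a subgroup of the gauge group of $(\g\ltimes_\xi L)\ot\Omega_\bullet$. Since $\pi$ is a dg Lie map and the gauge action is natural, the gauge transformation by $(x,0)$ covers the gauge transformation by $x$ on the base, which fixes $0$ because $x$ is a cycle; so this subgroup preserves the fibre, and the action it induces there is the monodromy, that is, the action attached to $(\alpha,\xi)$.

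It then remains to substitute. From the bracket and the twisted differential of $\g\ltimes_\xi L$, extended $\Omega_p$-linearly, one has $[(x,0),(0,c)]=(0,\theta_x(c))$ for every $c$, and $d(x,0)=(0,\xi(x))$ whenever $x\in Z_0(\g\ot\Omega_p)$ --- the latter because the $\g$- and $\Omega_p$-contributions to $d(x,0)$ assemble into the vanishing total differential of $x$, leaving only the $\xi$-twist. Hence $(\ad_{(x,0)})^{n}(0,c)=(0,\theta_x^{\,n}(c))$, and feeding a fibre element $(0,a)$, with $a\in\MC(L\ot\Omega_p)$, into the gauge formula yields
$$(x,0)\cdot(0,a) = \Bigl(0,\; a+\sum_{n\geq 0}\frac{\theta_x^{\,n}\bigl(\theta_x(a)-\xi(x)\bigr)}{(n+1)!}\Bigr),$$
which is precisely the claimed identity (and reconfirms, a posteriori, that the fibre is preserved). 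The group-action axioms and the simplicial naturality are inherited from those of the gauge action, so require no separate check.

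The only point that really needs care is the identification in the second paragraph: one must verify that the action obtained by restricting the gauge action agrees on the nose with the action associated to $(\alpha,\xi)$ in \cite{berglund17} --- essentially a matter of unwinding that construction --- and that the $\theta_x$ and $\xi(x)$ appearing in the final formula are the $\Omega_p$-linear extensions of the structure maps $\alpha$ and $\xi$. Beyond this, the argument is a short computation whose only subtlety is the Koszul-sign bookkeeping introduced by the coefficient algebra $\Omega_p$.
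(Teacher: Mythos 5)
Your proposal is correct and follows essentially the same route as the paper: both arguments realize the action by restricting the classical gauge action on $(\g\ltimes_\xi L)\ot\Omega_\bullet \cong (\g\ot\Omega_\bullet)\ltimes_{\xi\ot\id}(L\ot\Omega_\bullet)$ to the subalgebra $Z_0(\g\ot\Omega_\bullet)$, observe that $\partial^{\xi\ot\id}(x,0)=(0,(\xi\ot\id)(x))$ and that $[(x,0),(0,a)]$ stays in $L\ot\Omega_\bullet$, and then read off the formula by substitution (the required nilpotency coming from $\g$ being simply connected and $\Omega_p$ bounded). Your additional framing of the action as the monodromy of the fibration $\MC_\bullet(L)\to\MC_\bullet(\g\ltimes_\xi L)\to\MC_\bullet(\g)$ is a harmless elaboration of the same computation.
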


\begin{proof}
We start by recalling some of the theory of the so called gauge actions. Suppose that $\mathfrak h = \bigoplus \h_i$ is a dg Lie algebra with differential $d_\h$ and suppose that there exists some nilpotent Lie subalgebra $\h'_0\subseteq \h_0$, such that $\h$ becomes a nilpotent $\h'_0$-module (under the adjoint action). Then, there exists a group action of $\exp(\h_0')$ on $\MC(\h)$ called the gauge action, and is given by 
$$
\exp(X).A = A + \sum_{n\geq0}\frac{[X,-]^n}{(n+1)!}([X,A]-d_\h X) 
$$ where $X\in \h_0'$ and $A\in \MC(\h)$ (see \cite[Section 5.5]{manettidef} for details on the gauge action).
\\
Given a connected and bounded commutative dg algebra $\Omega=\oplus_{i=0}^n\Omega^i$, we have that $(\g\ltimes_\xi L)\ot\Omega \cong (\g\ot\Omega)\ltimes_{\xi\ot\id}(L\ot\Omega)$.
Since $\g$ is simply connected, it follows that the adjoint action of $(\g\ot\Omega)_0= (\g_1\ot\Omega^1)\oplus\cdots\oplus(\g_n\ot\Omega^n)$ on $(\g\ot\Omega)\ltimes_{\xi\ot\id}(L\ot\Omega)$ is nilpotent.
Hence the action of the subalgebra of zero cycles $Z_0(\g\ot\Omega)$ has also a nilpotent adjoint action on
$(\g\ot\Omega)\ltimes_{\xi\ot\id}(L\ot\Omega)$. Note that if $x\in Z_0(\g\ot\Omega)$ then $\partial^{\xi\ot\id}(x) =(\xi\ot\id)(x)$.
\\
Moreover, straightforward calculations give that $a\in L\ot\Omega$ is a Maurer-Cartan element in $L\ot\Omega$ if and only if it is a Maurer-Cartan element in $(\g\ot \Omega)\ltimes_{\xi\ot\id} L(\ot\Omega)$. We have that if $x\in Z_0(\g\ot\Omega)$ and $a\in L\ot\Omega$, then both $[x,a]$ and $\partial^{\xi \ot \id}(x)=(\xi\ot\id)(x)$ are elements of $L\ot\Omega\subset (\g\ot\Omega)\ltimes_{\xi\ot\id}(L\ot\Omega)$, and therefore the gauge action above defines an action of $\exp(Z_0(\g\ot\Omega))$ on $\MC(L\ot\Omega)$. In particular we have that there exists an action of $\exp_\bullet(\g)$ on $\MC_\bullet(L)$ given by the formula in the proposition.
\end{proof}

\begin{cor}\label{Cor:baseptPreserve}
If $\xi$ in the previous proposition is trivial, then the action of $\exp_\bullet(\g)$ on $\MC_\bullet(L)$ is basepoint preserving, where  $0\in\MC_\bullet(L)$ is the basepoint.
\end{cor}
\begin{proof}
This follows immediately from the explicit formula for the action, given in Proposition \ref{Prop:GaugeOuterAction}.
\end{proof}

We present some properties of $\exp_\bullet(\g)$.
\begin{prop}[\text{\cite[Corollary 3.10 and Theorem 3.15]{berglund17}}]\label{Prop:a-deloop-b-borel}
Let $\g$ be a simply connected dg Lie algebra of finite type and let $L$ be a dg Lie algebra. Suppose that $(\alpha,\xi)$ defines an outer action of $\g$ on $L$.\begin{itemize}
    \item[(a)] $\MC_\bullet(\g)$ is a delooping of $\exp_\bullet(\g)$.
    \item[(b)] The twisted semidirect product $\g\ltimes_\xi L$ is a Lie model for the Borel construction $B(*,\exp_\bullet(\g),\MC_\bullet(L))$.
\end{itemize}
\end{prop}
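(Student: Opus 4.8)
The plan is to obtain both parts from two basic properties of the realization functor $\widehat\MC_\bullet$ on complete dg Lie algebras: it carries a surjection of complete dg Lie algebras to a Kan fibration whose fibre over the base point is the realization of the kernel, and it carries an acyclic complete dg Lie algebra to a contractible simplicial set (both standard, checked one nilpotent quotient at a time, see \cite{berglund15}). Note first that a simply connected dg Lie algebra of finite type, all of whose iterated brackets of length $k$ lie in degrees $\geq k$, is automatically degree-wise nilpotent, so $\MC_\bullet(\g)=\widehat\MC_\bullet(\g)$.

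For part (a), I would use a path-loop fibration on the Lie side. Let $\Omega_1$ be the commutative dg algebra of polynomial differential forms on $\Delta^1$, with evaluations $\mathrm{ev}_0,\mathrm{ev}_1\colon\g\ot\Omega_1\to\g$ at the two vertices. The based-path ideal $P_0\g:=\ker(\mathrm{ev}_0)$ is acyclic and degree-wise nilpotent, so $\MC_\bullet(P_0\g)$ is contractible, and applying $\MC_\bullet$ to the surjection $\mathrm{ev}_1\colon P_0\g\twoheadrightarrow\g$ gives a fibration
\[
\MC_\bullet(\cL\g)\longrightarrow\MC_\bullet(P_0\g)\longrightarrow\MC_\bullet(\g),\qquad \cL\g:=\ker(\mathrm{ev}_0)\cap\ker(\mathrm{ev}_1),
\]
with contractible total space; this exhibits $\MC_\bullet(\cL\g)\simeq\Omega\MC_\bullet(\g)$, and $\MC_\bullet(\g)$ is simply connected because $\g$ is. It then remains to identify the grouplike monoid $\MC_\bullet(\cL\g)\simeq\Omega\MC_\bullet(\g)$ with $\exp_\bullet(\g)$: one checks $H_n(\cL\g)\cong H_{n+1}(\g)$, matching $\pi_n(\exp_\bullet(\g))\cong H_n(\g)$, and writes down a comparison map sending a loop to its holonomy — the Campbell-Baker-Hausdorff expression of Proposition~\ref{Prop:GaugeOuterAction} applied to $L=\g$ with the adjoint outer action — which is a weak equivalence of simplicial monoids since over each quotient $\g/\Gamma^r\g$ it reduces to the Dold-Kan correspondence. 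Hence $\MC_\bullet(\g)$ is a delooping of $\exp_\bullet(\g)$.

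For part (b), the projection $\g\ltimes_\xi L\twoheadrightarrow\g$ is a surjection of complete dg Lie algebras with kernel the ideal $L$, so by the above $\MC_\bullet$ turns it into a Kan fibration
\[
\MC_\bullet(L)\longrightarrow\MC_\bullet(\g\ltimes_\xi L)\longrightarrow\MC_\bullet(\g),
\]
with fibre $\MC_\bullet(L)$, and by part (a) its base is $B\exp_\bullet(\g)$. By Proposition~\ref{Prop:GaugeOuterAction} the outer action $(\alpha,\xi)$ equips $\MC_\bullet(L)$ with an action of the simplicial group $\exp_\bullet(\g)$, giving a second fibration $B(*,\exp_\bullet(\g),\MC_\bullet(L))\to B\exp_\bullet(\g)$ with the same fibre and base. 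I would conclude by showing these are fibre homotopy equivalent over $\MC_\bullet(\g)$. A fibration over a connected base is determined up to equivalence by the homotopy class of its classifying map into $B\aut(\MC_\bullet(L))$; the classifying map of the Borel construction is $B$ of the action map $\exp_\bullet(\g)\to\aut(\MC_\bullet(L))$ of Proposition~\ref{Prop:GaugeOuterAction}, while $\MC_\bullet(\g\ltimes_\xi L)\to\MC_\bullet(\g)$ is the fibration classified by the dg Lie map $\g\to\Der(L)\la1\ra\ltimes^{\ad}sL$ encoding $(\alpha,\xi)$, under the correspondence recalled before Definition~\ref{DefOuterAct}. These two classifying maps agree because the holonomy of the realized fibration is computable from the isomorphism $(\g\ltimes_\xi L)\ot\Omega\cong(\g\ot\Omega)\ltimes_{\xi\ot\id}(L\ot\Omega)$: parallel transport of a Maurer-Cartan element $a\in L\ot\Omega$ around the loop determined by $x\in Z_0(\g\ot\Omega)$ is exactly the gauge action $\exp(x).a=a+\sum_{n\geq0}\frac{\theta_x^n(\theta_x(a)-\xi(x))}{(n+1)!}$ of Proposition~\ref{Prop:GaugeOuterAction}. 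Therefore $\MC_\bullet(\g\ltimes_\xi L)\simeq B(*,\exp_\bullet(\g),\MC_\bullet(L))$, which is (b); consistently, taking $L=0$ recovers (a).

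The step I expect to be the main obstacle is this final identification in both parts: in (a), upgrading the agreement of homotopy groups to a multiplicative weak equivalence $\exp_\bullet(\g)\xrightarrow{\sim}\MC_\bullet(\cL\g)$; in (b), verifying that the fibration coming from the twisted semidirect product genuinely realizes the $\exp_\bullet(\g)$-action of Proposition~\ref{Prop:GaugeOuterAction}, rather than merely having the right fibre and base. Both rest on the compatibility $(\g\ltimes_\xi L)\ot\Omega\cong(\g\ot\Omega)\ltimes_{\xi\ot\id}(L\ot\Omega)$ already used in the proof of Proposition~\ref{Prop:GaugeOuterAction}, which makes the topological holonomy computable by the algebraic gauge formula.
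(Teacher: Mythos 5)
The paper offers no internal proof of this proposition: it is imported directly from \cite{berglund17} (Corollary 3.10 and Theorem 3.15), so there is no argument of the paper's own to compare against. Your outline does reconstruct the broad strategy of the cited source --- realize surjections of complete dg Lie algebras as Kan fibrations, use acyclicity to get contractible total spaces, and compare $\MC_\bullet(\g\ltimes_\xi L)\to\MC_\bullet(\g)$ with the Borel fibration through their classifying maps --- but the two steps you yourself flag as ``the main obstacle'' are precisely the content of the cited theorems, and as written they are genuine gaps rather than routine verifications.

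In (a), the path--loop fibration gives $\MC_\bullet(\cL\g)\simeq\Omega\MC_\bullet(\g)$ as spaces, but a delooping statement requires a \emph{multiplicative} weak equivalence with $\exp_\bullet(\g)$. Your holonomy map is not actually constructed: an $n$-simplex of $\MC_\bullet(\cL\g)$ is a Maurer--Cartan element of $\cL\g\ot\Omega_n$, and producing from it an element of $\exp(Z_0(\g\ot\Omega_n))$ compatibly with all face and degeneracy maps and with the loop concatenation is the nontrivial point. Moreover the justification ``over each quotient $\g/\Gamma^r\g$ it reduces to the Dold--Kan correspondence'' only applies to the abelian subquotients $\Gamma^r\g/\Gamma^{r+1}\g$; for the non-abelian quotients one needs an induction over central extensions together with compatibility of the group structures. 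In (b), the classification argument needs the two classifying maps $\MC_\bullet(\g)\to B\aut(\MC_\bullet(L))$ to be identified, and the assertion that the holonomy of $\MC_\bullet(\g\ltimes_\xi L)\to\MC_\bullet(\g)$ is the gauge action of Proposition \ref{Prop:GaugeOuterAction} is exactly what has to be proved; no comparison map between the two fibrations (for instance, a map out of the two-sided bar construction, or a common principal $\exp_\bullet(\g)$-bundle model) is exhibited. There is also a hypothesis to record: for $\MC_\bullet(L)$ and the fibre identification to make sense, $L$ must be connected and degree-wise nilpotent (or complete), which holds in the paper's applications but is nowhere invoked in your argument. The architecture is right, but the proof is not complete without closing these two identifications --- which is presumably why the paper cites them rather than reproving them.
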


\subsection{Mapping spaces}\label{Subsec:MappingSpaces} 
In \cite{berglund17} it is shown that if $L$ is a connected degree-wise nilpotent dg Lie algebra of finite type, and $\Pi$ is connected dg Lie algebra then there is a weak equivalence 
$$\widehat\MC_\bullet(\Hom(\mathcal C(\Pi), L)) \to \map( \MC_\bullet(\Pi),\MC_\bullet(L)),$$
where $\mathcal C(\Pi)$ is the Chevalley-Eilenberg coalgebra construction on $\Pi$ and $\Hom(\mathcal C(\Pi), L)$ is the convolution dg Lie algebra. In particular $\Hom(\mathcal C(\Pi),L)$ is a Lie model for $\map( \MC_\bullet(\Pi),\MC_\bullet(L))$ in the sence of Definition \ref{Def:GeneralRealization}.
We want to show that this weak equivalence is equivariant with respect to the action of $\exp_\bullet(\g)$.

\begin{lemma}\label{OutActOnHom} An outer action of $\mathfrak g$ on $L$ induces an outer action of $\mathfrak g$ on the convolution dg Lie algebra $\Hom(C,L)$ for any counital cocommutative dg coalgebra $C$.
\end{lemma}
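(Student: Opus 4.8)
The plan is to define the induced outer action of $\g$ on $\Hom(C,L)$ pointwise, using the coalgebra structure of $C$ to distribute the action, and then verify that it satisfies axioms (I)--(V) of Definition \ref{DefOuterAct}. Concretely, given an outer action $(\alpha,\xi)$ of $\g$ on $L$, I would set, for $x \in \g$ and $f \in \Hom(C,L)$,
$$
(x.f)(c) = x.\bigl(f(c)\bigr), \qquad \txi(x) = \bigl(c \mapsto \xi(x)\,\varepsilon(c)\bigr),
$$
where $\varepsilon \colon C \to \k$ is the counit; that is, $\txi(x) \in \Hom(C,L)$ is the composite $C \xrightarrow{\varepsilon} \k \xrightarrow{\xi(x)} L$. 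One checks that $x.f$ again lies in $\Hom(C,L)$ and has degree $|x|+|f|$, while $\txi(x)$ has degree $|x|-1$, as required.

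The key steps, in order: first, record the differential and bracket on $\Hom(C,L)$ (already given in Example \ref{Ex:Mapping}: $\partial f = d_L \circ f - (-1)^{|f|} f \circ d_C$ and $[f,g] = \ell \circ (f \ot g) \circ \Delta$); second, verify axioms (I) and (II), which are essentially immediate because $x.f$ is defined by postcomposition with $\theta_x = x.(-)$ and these two axioms for $L$ say exactly that $\theta_{[x,y]} = [\theta_x,\theta_y]$ as operators and that each $\theta_x$ is a derivation of the bracket $\ell$ — composing with $\Delta$ on the source changes nothing; third, verify (III) and (IV) for $\txi$, which reduce to axioms (III) and (IV) for $\xi$ on $L$ after noting that $\varepsilon$ is a chain map (so $\partial(\txi(x)) = (d_L \xi(x))\cdot\varepsilon = -(\xi d_\g x)\cdot\varepsilon = -\txi(d_\g x)$) and that the counitality of $C$ together with cocommutativity makes the bracket term $[\txi(x),-]$-computation collapse correctly; fourth, verify (V), the compatibility $\partial(x.f) = (d_\g x).f + (-1)^{|x|} x.(\partial f) + [\txi(x), f]$. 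Expanding the left side using postcomposition by $\theta_x$ and the right side using the formula for $\partial$ on $\Hom(C,L)$, the purely "$d_L$ versus $\theta_x$" part is exactly axiom (V) for $L$ applied pointwise to $f(c)$, the "$d_C$" parts cancel because $\theta_x$ is applied after $f$, and the remaining term must be shown to equal $[\txi(x),f](c) = \ell\bigl(\xi(x)\ot f(c')\varepsilon(c'')\bigr) = [\xi(x), f(c)]$, using counitality $\sum \varepsilon(c'') c' = c$.

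I expect the main obstacle to be bookkeeping the counit and coproduct in axioms (IV) and (V): the bracket on $\Hom(C,L)$ involves $\Delta$, so every term involving $\txi$ produces a sum over the Sweedler decomposition, and one must repeatedly invoke counitality (and, for (IV), cocommutativity, to match the sign-laden two-term right-hand side of (IV)) to see these sums collapse back to the pointwise identities coming from $L$. None of this is conceptually deep, but getting all the Koszul signs consistent — particularly the sign $(-1)^{|y||\xi(x)|}$ in (IV) and the interplay between $|f|$ and $|x|$ in (V) — is where care is needed. A clean way to organize the verification is to observe that postcomposition with $(-)$ gives a morphism of dg Lie algebras $\Der(L)\la1\ra \ltimes^{\ad} sL \to \Der(\Hom(C,L))\la 1\ra \ltimes^{\ad} s\Hom(C,L)$ and then appeal to the Proposition equating outer actions with such morphisms; but since that map itself still needs the same sign checks, I would likely just verify (I)--(V) directly.
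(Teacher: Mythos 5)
Your proposal is correct and follows exactly the paper's route: the paper defines $\tilde\alpha(x\ot f)(c)=x.f(c)$ and $\tilde\xi(x)=\varepsilon(\,\cdot\,)\xi(x)$ precisely as you do, and then simply asserts that axioms (I)--(V) hold by a straightforward check, which your outline carries out in more detail. No discrepancy to report.
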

\begin{proof}
We define maps $\tilde \alpha\colon \mathfrak g\ot \Hom(C,L) \to \Hom(C,L)$ and $\tilde \xi\colon \mathfrak g \to \Hom(C,L)$. We denote $\talpha(x\ot f)$ by $x.f$ and is given by
$$\widetilde\alpha(x\ot f)(c)=(x.f)(c) = x.f(c)$$
Let $\varepsilon\colon C\to \Q$ be the counit. We define $\txi$ as the composition
$$\mathfrak g\xrightarrow{\xi} L \xrightarrow{\cong}\Hom(\Q,  L)\xrightarrow{\varepsilon^*} \Hom(C,L),$$
so $(\txi(x))(c) = \varepsilon(c)\cdot\xi(x)$.

It is straightforward to show that $\talpha$ and $\txi$ satisfies properties (I)-(V) in Definition \ref{DefOuterAct}. 
 \end{proof}
\begin{prop}\label{Prop:expEquivariance} Let $\g,\ L$, and $\Pi$ be connected nilpotent dg Lie algebras, where $L$ is of finite type. Let $(\alpha,\xi)$ be an outer action of $\g$ on $L$. The evaluation map 
 $$E\colon\MC(\Hom_{\Omega_\bullet}(\mathcal C_{\Omega_\bullet}(\Pi\ot \Omega_\bullet),L\ot \Omega_\bullet))\times \mathcal G(\mathcal C_{\Omega_\bullet}(\Pi\ot \Omega_\bullet))\to \MC(L\otimes\Omega_\bullet)$$
is $\exp_\bullet(\g)$-equivariant.
\end{prop}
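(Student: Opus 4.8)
The plan is to reduce the equivariance statement to the explicit formula for the exponential action provided by Proposition~\ref{Prop:GaugeOuterAction}, applied on both sides. On the source side, the outer action of $\g$ on $\Pi$ (we have one since $\Pi$ is a Lie model for $\MC_\bullet(\Pi)$ and the action is the adjoint-type outer action coming from $\g$ acting on the domain of the mapping space) induces, via Lemma~\ref{OutActOnHom} applied to the Chevalley--Eilenberg coalgebra $C=\mathcal C(\Pi)$, an outer action on $\Hom(\mathcal C(\Pi),L)$; and the outer action of $\g$ on $L$ induces one on $\Hom(\mathcal C(\Pi),L)$ as well. The action of $\exp_\bullet(\g)$ on the Maurer--Cartan set $\MC(\Hom_{\Omega_\bullet}(\mathcal C_{\Omega_\bullet}(\Pi\ot\Omega_\bullet),L\ot\Omega_\bullet))$ is then the gauge action attached to that outer action, and likewise for the action on $\MC(L\ot\Omega_\bullet)$ and on the "group object" $\mathcal G(\mathcal C_{\Omega_\bullet}(\Pi\ot\Omega_\bullet))$. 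So the claim is that the evaluation map $E$ intertwines these three gauge actions.

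First I would set up notation: fix $g\in\exp_\bullet(\g)$, write $g=\exp(x)$ for $x\in Z_0(\g\ot\Omega_\bullet)$, and write $\theta_x$ for the various operators $a\mapsto x.a$ on $L\ot\Omega_\bullet$, on $\mathcal C_{\Omega_\bullet}(\Pi\ot\Omega_\bullet)$, and on the convolution algebra. By Lemma~\ref{OutActOnHom}, the operator $\theta_x$ on $\Hom(C,L)$ splits as the sum of the "target" part $f\mapsto x.f(-)$ and the "source" part $f\mapsto \pm f(x.-)$, and the twisting term $\txi(x)$ is $\varepsilon(-)\cdot\xi(x)$. The key computation is then to plug the series formula $\exp(x).f = f+\sum_{n\ge 0}\frac{\theta_x^n(\theta_x(f)-\txi(x))}{(n+1)!}$ into the evaluation $E(f,c)=f(c)$ and to expand $\exp(x).c$ on the source using the analogous series, and to check term by term that $E(\exp(x).f,\exp(x).c)=\exp(x).(f(c))$. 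This is where the compatibility of the three outer actions --- conditions (I)--(V) of Definition~\ref{DefOuterAct} together with the explicit form of $\widetilde\alpha$ and $\widetilde\xi$ --- is used: the "source part" of $\theta_x$ on the convolution algebra is precisely designed so that $(\theta_x f)(c) = x.f(c) - (-1)^{|x||f|}f(x.c)$ up to sign conventions, i.e.\ the Leibniz-type identity relating the action on $\Hom(C,L)$ to the actions on $C$ and on $L$; and the counit-twisting $\txi(x)$ evaluated on $c$ is $\varepsilon(c)\xi(x)$, which on Maurer--Cartan cycles $c$ in $\mathcal G$ matches the twist $\xi(x)$ on the $L$-side.

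The main obstacle I anticipate is purely bookkeeping: getting the signs and the gauge-action series to match under evaluation. The cleanest route --- and the one I would actually pursue --- is to avoid manipulating the infinite series directly by instead invoking Proposition~\ref{Prop:a-deloop-b-borel}(b): the gauge action of $\exp_\bullet(\g)$ on a Maurer--Cartan space $\MC_\bullet(M)$ associated to an outer action $(\alpha,\xi)$ of $\g$ on $M$ is characterized by the fact that $\MC_\bullet(\g\ltimes_\xi M)$ is the corresponding Borel construction; equivalently, the action is the unique one compatible with the inclusion $\MC_\bullet(M)\hookrightarrow \MC_\bullet(\g\ltimes_\xi M)$ and the gauge action of $\exp(Z_0(\g\ot\Omega_\bullet))$ on the latter. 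Concretely, I would show that the map of dg Lie algebras
\[
\g\ltimes_{\widetilde\xi}\Hom(\mathcal C(\Pi),L)\ \longrightarrow\ \g\ltimes_\xi L,\qquad (x,f)\mapsto (x, \text{"}f\text{ evaluated"}),
\]
is well-defined once one also takes into account the action on the source; more precisely, that evaluation defines a morphism of the associated semidirect products over $\Omega_\bullet$ compatible with the inclusions of $L\ot\Omega_\bullet$ and of the convolution algebra. Since the gauge action on $\MC$ of a semidirect product restricts to the gauge action on the sub--Maurer--Cartan space coming from the given outer action, and since $E$ is then the restriction of a map induced by a morphism of dg Lie algebras, $\exp_\bullet(\g)$-equivariance of $E$ follows from functoriality of the gauge action under morphisms of dg Lie algebras --- the fact that if $\phi\colon M\to N$ is a morphism and $x\in Z_0$, then $\phi(\exp(x).A)=\exp(\phi(x)).\phi(A)$, which is immediate from the series formula since $\phi$ commutes with brackets and the differential. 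The one genuine check that remains, and that I'd write out, is that the source-action term in $\exp(x).f$ exactly accounts for the discrepancy between $E(f,\exp(x).c)$ and $E(f,c)$, i.e.\ that the evaluation map is a morphism not merely of dg Lie algebras but of the $\g$-outer-action structures on both sides; this is the content of Lemma~\ref{OutActOnHom} combined with the Leibniz rule (II)/(V), and is where all the signs must be pinned down carefully.
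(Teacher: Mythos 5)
There is a genuine gap here, and it stems from a misreading of the structures involved. You posit an outer action of $\g$ on $\Pi$, a corresponding nontrivial action of $\exp_\bullet(\g)$ on the grouplike elements $\mathcal G(\mathcal C_{\Omega_\bullet}(\Pi\ot\Omega_\bullet))$, and a decomposition of the operator $\theta_x$ on the convolution algebra into a ``target'' part $f\mapsto x.f(-)$ and a ``source'' part $f\mapsto \pm f(x.-)$. None of this is present in the setup: the only outer action in play is that of $\g$ on $L$, the induced outer action on $\Hom(\mathcal C(\Pi),L)$ from Lemma~\ref{OutActOnHom} is defined purely by post-composition, $(x.f)(c)=x.(f(c))$, with no source term, and the action of $\exp_\bullet(\g)$ on the product is trivial on the $\mathcal G$-factor, i.e.\ $\exp(x).(f,c)=(\exp(x).f,\,c)$. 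Consequently the ``one genuine check that remains'' in your plan --- that the source-action term in $\exp(x).f$ cancels the discrepancy between $E(f,\exp(x).c)$ and $E(f,c)$ --- is a check of a compatibility that does not exist and is not needed; as stated it cannot be carried out, and a proof organized around it proves a different statement.

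The actual argument is much shorter: apply the explicit gauge formula of Proposition~\ref{Prop:GaugeOuterAction} to $\exp(x).f$, observe that evaluation at a fixed $c$ commutes with every application of $\theta_x$ (precisely because the induced action is post-composition), and use that $\txi(x)(c)=\varepsilon(c)\,\xi(x)=\xi(x)$ since a grouplike element is counital. This gives $E(\exp(x).f,c)=(\exp(x).f)(c)=\exp(x).(f(c))=\exp(x).E(f,c)$ term by term in the series. Your fallback route via functoriality of the gauge action under dg Lie algebra morphisms is a reasonable instinct in general, but here it again presupposes the source-action compatibility, and moreover evaluation at a varying grouplike element is not a single morphism of dg Lie algebras, so the reduction does not go through as stated.
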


\begin{proof}
That the image of $E$ really lands in  $\MC(L\otimes\Omega_\bullet)$ is proved in \cite{berglund17}. We prove that $E$ is $\exp_\bullet(\g)$-equivariant.
Using Proposition \ref{Prop:GaugeOuterAction}, we get 
\begin{align*}
E(\exp(x).(f,c)) &= E(\exp(x).f,c) =(\exp(x).f)(c)
\\&= \bigg(f + \sum_{n\geq 0}\frac{\theta^n_x(\theta_x(f)-\txi(x))}{(n+1)!}\bigg)(c)
\\ & = f(c) + \sum_{n\geq 0}\frac{\theta^n_x(\theta_x(f(c))-\txi(x)(c))}{(n+1)!}
\\ & = f(c) + \sum_{n\geq 0}\frac{\theta^n_x(\theta_x(f(c))-\varepsilon(c)\xi(x))}{(n+1)!}
\\ & = [c\text{ is a grouplike element}\Rightarrow \varepsilon(c)=1]
\\ & = f(c) + \sum_{n\geq 0}\frac{\theta^n_x(\theta_x(f(c))-\xi(x)) }{(n+1)!}
\\& = \exp(x).(f(c)) = \exp(x).E(f,c)
\end{align*}
\end{proof}

\begin{cor}
 There exists an $\exp_\bullet(\g)$-equivariant weak equivalence 
$$\widehat\MC(\Hom(\mathcal C(\Pi), L)\simeq\map(\MC_\bullet(\Pi), \MC_\bullet(L)),$$ 
that is natural in $\Pi$ and $L$.
\end{cor}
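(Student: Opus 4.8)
The plan is to show that the weak equivalence already exhibited in \cite{berglund17}, and recalled in Subsection~\ref{Subsec:MappingSpaces}, is automatically $\exp_\bullet(\g)$-equivariant, by identifying it with the map adjoint to the evaluation map $E$ of Proposition~\ref{Prop:expEquivariance} and then invoking that proposition. Concretely, the weak equivalence $\widehat\MC_\bullet(\Hom(\mathcal C(\Pi),L))\to\map(\MC_\bullet(\Pi),\MC_\bullet(L))$ sends a simplex, represented by a Maurer-Cartan element $f$ of $\Hom_{\Omega_\bullet}(\mathcal C_{\Omega_\bullet}(\Pi\ot\Omega_\bullet),L\ot\Omega_\bullet)$, to the map of simplicial sets which on group-like elements $c$ is $c\mapsto E(f,c)=f(c)$; that is, it is precisely the adjoint of $E$. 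So it suffices to equip source and target with the expected $\exp_\bullet(\g)$-actions and check that $E$ intertwines them, which is the content of Proposition~\ref{Prop:expEquivariance}.

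First I would pin down the two actions. The outer action $(\alpha,\xi)$ of $\g$ on $L$ preserves the connectivity filtration $L\supseteq L\la1\ra\supseteq\cdots$, since $\alpha$ has degree $0$ and $\xi$ degree $-1$; hence, by Lemma~\ref{OutActOnHom}, it induces outer actions of $\g$ on the nilpotent quotients $\Hom(\mathcal C(\Pi),L)/\Hom(\mathcal C(\Pi),L\la r\ra)$ that are compatible with the tower, and applying Proposition~\ref{Prop:GaugeOuterAction} at each level and passing to the inverse limit defines the $\exp_\bullet(\g)$-action on $\widehat\MC_\bullet(\Hom(\mathcal C(\Pi),L))$. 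On the target, $\exp_\bullet(\g)$ acts by post-composition with the action on $\MC_\bullet(L)$ determined by $(\alpha,\xi)$ via Proposition~\ref{Prop:GaugeOuterAction}, and trivially on $\MC_\bullet(\Pi)$. These are exactly the actions appearing in Proposition~\ref{Prop:expEquivariance}: the trivial action on the group-like part, the gauge action on $\MC(L\ot\Omega_\bullet)$, and the induced outer action of Lemma~\ref{OutActOnHom} on the convolution algebra.

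With these choices, Proposition~\ref{Prop:expEquivariance} says that $E$ is $\exp_\bullet(\g)$-equivariant, hence so is its adjoint, i.e.\ the map $\widehat\MC_\bullet(\Hom(\mathcal C(\Pi),L))\to\map(\MC_\bullet(\Pi),\MC_\bullet(L))$; since this map is already a weak equivalence by \cite{berglund17}, it is an $\exp_\bullet(\g)$-equivariant weak equivalence. Naturality in $\Pi$ and $L$ is then formal: $\mathcal C(-)$, $\Hom(-,-)$, the completion and realization functors, the induced outer action of Lemma~\ref{OutActOnHom}, and $E$ are all functorial, and a morphism $L\to L'$ (resp.\ $\Pi\to\Pi'$) compatible with outer actions induces a morphism of convolution Lie algebras compatible with the induced outer actions, so the relevant squares commute.

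The step needing the most care is the bookkeeping that links Proposition~\ref{Prop:expEquivariance}, phrased at the level of $\Omega_\bullet$-linear homs, Chevalley-Eilenberg constructions over $\Omega_\bullet$, and the evaluation map, with the simplicial map underlying the weak equivalence of \cite{berglund17}: one must check that the adjunction turning $E$ into a map into the mapping space is itself $\exp_\bullet(\g)$-equivariant and compatible with the inverse limit defining $\widehat\MC_\bullet$. In practice the only genuine verifications are that the induced outer action preserves the completion filtration on $\Hom(\mathcal C(\Pi),L)$, so that the level-wise $\exp_\bullet(\g)$-actions assemble into one on the limit, and that the action on $\map(\MC_\bullet(\Pi),\MC_\bullet(L))$ is indeed the naive post-composition action; both follow by unwinding the explicit formula in Proposition~\ref{Prop:GaugeOuterAction}.
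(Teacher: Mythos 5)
Your proposal is correct and follows essentially the same route as the paper: identify the weak equivalence with the adjoint of the evaluation map $E$, deduce equivariance from Proposition \ref{Prop:expEquivariance}, and use the results of \cite{berglund17} for the weak equivalence, its naturality, and the identification of $\widehat\MC_\bullet(\Hom(\mathcal C(\Pi),L))$ with $\MC(\Hom_{\Omega}(\mathcal C_{\Omega}(\Pi\ot\Omega_\bullet),L\ot\Omega_\bullet))$. Your extra bookkeeping about the filtration and the inverse limit is exactly the ``straightforward to show'' step the paper leaves to the reader.
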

\begin{proof}
By Proposition \ref{Prop:expEquivariance} we have that the adjoint map of $E$
$$ \MC(\Hom_\Omega(\mathcal C_\Omega(\Pi\ot \Omega_\bullet),L\ot \Omega_\bullet))\to \map(\MC_\bullet(\Pi),\MC_\bullet(L))$$
is $\exp_\bullet(\g)$ equivariant. By \cite[Theorem 3.16]{berglund17} this is also a weak equivalence that is natural in $\Pi$ and $L$. Following \cite[Theorem 3.17]{berglund17}, there exists a natural isomorphism 
$$\widehat\MC_\bullet(\Hom(\mathcal C(\Pi),L)\cong \MC(\Hom_\Omega(\mathcal C_\Omega(\Pi\ot \Omega_\bullet),L\ot \Omega_\bullet)).$$
It is straightforward to show that this isomorphism respects the $\exp_\bullet(\g)$-action.
\end{proof}

\begin{cor}\label{Cor:ModelBasedMaps}
There is a weak equivalence of spaces
$$\widehat\MC_\bullet(\Hom(\bc(\Pi),L))\to\map_*(\MC_\bullet(\Pi),\MC_\bullet(L)).$$
Moreover, for every outer $(\alpha,\xi)$ of $\g$ on $L$ where $\xi$ is trivial,  the map
above is $\exp_\bullet(\g)$-equivariant.
\end{cor}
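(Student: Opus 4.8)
The plan is to deduce the based-mapping-space statement from the free-mapping-space statement of the preceding corollary by replacing the Chevalley--Eilenberg coalgebra $\mathcal C(\Pi)$ with its reduced (coaugmentation coideal) version $\bc(\Pi)$, and checking that this substitution corresponds on the topological side to passing from $\map$ to $\map_*$. Concretely, the coaugmentation $\Q\to\mathcal C(\Pi)$ splits off a summand, giving a direct sum decomposition $\mathcal C(\Pi)\cong\Q\op\bc(\Pi)$ of dg coalgebras, and hence a splitting of convolution dg Lie algebras $\Hom(\mathcal C(\Pi),L)\cong \Hom(\Q,L)\op\Hom(\bc(\Pi),L)\cong L\op\Hom(\bc(\Pi),L)$. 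Under the weak equivalence $\widehat\MC_\bullet(\Hom(\mathcal C(\Pi),L))\simeq\map(\MC_\bullet(\Pi),\MC_\bullet(L))$ from the previous corollary, the $L$-summand records the value of a map at the basepoint (the image of the grouplike element $1\in\mathcal C(\Pi)$, which is the realization of the basepoint of $\MC_\bullet(\Pi)$), and restricting to the sub-dg-Lie-algebra $\Hom(\bc(\Pi),L)$, i.e.\ to convolution homomorphisms that vanish on the coaugmentation, cuts out precisely the based maps. So the first step is to make this identification of $\widehat\MC_\bullet(\Hom(\bc(\Pi),L))$ with the homotopy fiber of $\mathrm{ev}_*\colon\map(\MC_\bullet(\Pi),\MC_\bullet(L))\to\MC_\bullet(L)$ over the basepoint precise, yielding the desired weak equivalence; alternatively one can invoke the version of \cite[Theorem 3.16]{berglund17} for based mapping spaces directly if that is cleaner.

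For the equivariance statement, I would run the same computation as in the proof of Proposition \ref{Prop:expEquivariance}, but now observe where the hypothesis that $\xi$ is trivial enters. By Lemma \ref{OutActOnHom}, an outer action $(\alpha,\xi)$ of $\g$ on $L$ induces an outer action $(\talpha,\txi)$ on $\Hom(C,L)$ for any $C$; when $\xi=0$ we get $\txi=0$, so the induced outer action on $\Hom(\bc(\Pi),L)$ has trivial $\txi$, and by Corollary \ref{Cor:baseptPreserve} the resulting $\exp_\bullet(\g)$-action on $\widehat\MC_\bullet(\Hom(\bc(\Pi),L))$ fixes the basepoint $0$. Since $\xi=0$ also makes the $\exp_\bullet(\g)$-action on $\MC_\bullet(L)$ basepoint preserving, the evaluation map $E$ of Proposition \ref{Prop:expEquivariance} restricts to the reduced coalgebra $\bc(\Pi)$ and remains $\exp_\bullet(\g)$-equivariant there --- the computation in that proof never used anything about the non-reduced part, only that $c$ ranges over grouplike elements so that $\varepsilon(c)=1$; on $\bc(\Pi)\ot\Omega_\bullet$ one works instead with the grouplike elements of $\mathcal C_{\Omega_\bullet}(\Pi\ot\Omega_\bullet)$ lying over the basepoint and uses the splitting above. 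Passing to adjoints and using the naturality from the previous corollary then gives the $\exp_\bullet(\g)$-equivariant weak equivalence.

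The main obstacle I anticipate is bookkeeping rather than anything deep: namely, verifying cleanly that the splitting $\Hom(\mathcal C(\Pi),L)\cong L\op\Hom(\bc(\Pi),L)$ is compatible with \emph{both} the dg Lie structure and the induced outer action, and that under the weak equivalence of the previous corollary this splitting really does correspond to the fibration sequence $\map_*(\MC_\bullet(\Pi),\MC_\bullet(L))\to\map(\MC_\bullet(\Pi),\MC_\bullet(L))\xrightarrow{\mathrm{ev}_*}\MC_\bullet(L)$. One has to be slightly careful because $\bc(\Pi)$ is only a subcoalgebra (without counit) and $\Hom(\bc(\Pi),L)$ is a dg Lie \emph{subalgebra} of $\Hom(\mathcal C(\Pi),L)$, not a quotient, so the correspondence with based maps should be phrased as: a convolution Maurer--Cartan element lies in $\Hom(\bc(\Pi),L)$ iff the corresponding map sends the basepoint to the basepoint. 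Once that dictionary is set up, restricting all the structures and the map $E$ is routine, and the triviality of $\xi$ is exactly what is needed for the restriction to stay well-defined and equivariant.
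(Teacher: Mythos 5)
Your plan is essentially the paper's proof: the paper compares the fibration sequence $\widehat\MC_\bullet(\Hom(\bc(\Pi),L))\to\widehat\MC_\bullet(\Hom(\mathcal C(\Pi),L))\to\MC_\bullet(L)$ --- obtained because $\widehat\MC_\bullet$ sends the surjection $\Hom(\mathcal C(\Pi),L)\to\Hom(\Q,L)\cong L$ to a Kan fibration with fiber the realization of the kernel $\Hom(\bc(\Pi),L)$ --- with the evaluation fibration $\map_*\to\map\to\MC_\bullet(L)$, concludes by the long exact sequence of homotopy groups, and handles equivariance exactly as you do via Corollary \ref{Cor:baseptPreserve}. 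One small correction: your ``direct sum of dg Lie algebras'' $\Hom(\mathcal C(\Pi),L)\cong L\oplus\Hom(\bc(\Pi),L)$ is only a splitting of the underlying complexes (the convolution bracket mixes the two summands), so the cleaner formulation is the one the paper uses, namely that $\Hom(\bc(\Pi),L)$ is the kernel of a surjection of complete dg Lie algebras.
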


\begin{proof} By \cite[Proposition 5.4]{berglund15}, the functor $\widehat\MC_\bullet$ takes surjections of complete dg Lie algebras to (Kan) fibrations. In particular, the surjection $\Hom(\mathcal C(\Pi),L)\to \Hom(\Q,L)\cong L$ induces a fibration $\widehat\MC_\bullet(\Hom(\mcc(\Pi),L) \to\MC_\bullet(L)$, which has fiber $\widehat\MC_\bullet(\Hom(\bc(\Pi),L)$.

Moreover, the map $\map(\MC_\bullet(\Pi),\MC_\bullet(L))\to \map(*,\MC_\bullet(L))\cong \MC_\bullet(L)$ is a fibration, which has fiber $\map_*(\MC_\bullet(\Pi),\MC_\bullet(L))$, and thus we get a commuting diagram 
$$\xymatrix{\widehat\MC_\bullet(\Hom(\bc(\Pi),L))\ar[r]\ar[d]&\widehat\MC_\bullet(\Hom(\mcc(\Pi),L))\ar[r]\ar[d] & \MC_\bullet(L)\ar@{=}[d]
\\
\map_*(\MC_\bullet(\Pi),\MC_\bullet(L))\ar[r]&\map(\MC_\bullet(\Pi),\MC_\bullet(L))\ar[r]& \MC_\bullet(L)}
$$ with rows being fibrations. The long exact sequence of homotopy groups yields now the weak equivalence $\widehat\MC_\bullet(\Hom(\bc(\Pi),L))\to \map_*(\MC_\bullet(\Pi),\MC_\bullet(L))$. This completes the proof for the first part of the statement.

For the second part, we just recall  that the triviality of $\xi$ gives that the induced $\exp_\bullet(\g)$-action on $\MC_\bullet(L)$ is basepoint preserving, see Corollary \ref{Cor:baseptPreserve}, and will therefore induce an action on the based mapping space $\map_*(\MC_\bullet(\Pi),\MC_\bullet(L))$. 
\end{proof}

\begin{prop}\label{Prop:ModelBorel}
Let $\g= \Der(L)\la1\ra$. There exists an outer action $(\alpha,\xi)$ of $\g$ on $L$ where $\alpha(\theta,x) =\theta(x)$ and where $\xi=0$. The action of  $\exp_\bullet(\g)$ on $\MC_\bullet(L)$ yields a map $\exp_\bullet(\g)\to \aut_{\circ,*}(\MC_\bullet(L))$ which is a weak equivalence.    In particular the triples $(*,\exp_\bullet(\g), \widehat\MC_\bullet(\Hom(\bc(\Pi),L))$ and $(*,\aut_{*,\circ}(\MC_\bullet(L)),\map_*(\MC_\bullet(\Pi),\MC_\bullet(L)))$ are weakly equivalent in the category $\mathscr K$ (discussed in Section \ref{gbc}). A Lie model for 
 $$B(*,\aut_{*,\circ}(\MC_\bullet(L)),\map_*(\MC_\bullet(\Pi),\MC_\bullet(L)))$$
 is given by $\g\ltimes\Hom(\bc(\Pi),L)$.
\end{prop}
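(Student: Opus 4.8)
The plan is to run the argument in three stages: verify that $(\alpha,\xi)$ with $\alpha(\theta\ot x)=\theta(x)$ and $\xi=0$ is an outer action, identify the resulting action of $\exp_\bullet(\g)$ with the tautological action of $\aut_{*,\circ}(\MC_\bullet(L))$, and then feed this together with Corollary~\ref{Cor:ModelBasedMaps} into Propositions~\ref{barProperties}~(c) and~\ref{Prop:a-deloop-b-borel}~(b). For the first stage, since $\g=\Der(L)\la1\ra$ is a sub dg Lie algebra of $\Der(L)$ it is enough to check axioms (I)--(V) of Definition~\ref{DefOuterAct} for the evaluation pairing on all of $\Der(L)$: (I) is the defining formula for the bracket on $\Der(L)$, (II) is the Leibniz rule built into the definition of a derivation, (III) and (IV) hold trivially because $\xi=0$, and (V) unwinds to $d(\theta(x))=D(\theta)(x)+(-1)^{|\theta|}\theta(dx)$, which is exactly the definition of $D$. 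Then, by Proposition~\ref{Prop:GaugeOuterAction} with $\xi=0$, the induced action of $\exp_\bullet(\g)$ on $\MC_\bullet(L)$ is $\exp(x).a=\sum_{n\ge0}\theta_x^{\,n}(a)/n!$ with $\theta_x=x.(-)$, i.e.\ $\exp(x)$ acts as the exponential of a derivation, and by Corollary~\ref{Cor:baseptPreserve} it fixes $0$. Since $\exp_\bullet(\g)$ is a group each element acts by a based homotopy automorphism, and since $\g$ is simply connected (in particular $\g_0=0$) the simplicial group $\exp_\bullet(\g)$ is connected; so I obtain a map of grouplike monoids $\Phi\colon\exp_\bullet(\g)\to\aut_{*,\circ}(\MC_\bullet(L))$.

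The crux is showing $\Phi$ is a weak equivalence, and this is where I expect the real work to lie. As source and target are grouplike connected monoids, it suffices to show $B\Phi$ is a weak equivalence of the simply connected rational spaces $B\exp_\bullet(\g)$ and $B\aut_{*,\circ}(\MC_\bullet(L))$. On one side, $B\exp_\bullet(\g)\simeq\MC_\bullet(\g)=\MC_\bullet(\Der(L)\la1\ra)$ by Proposition~\ref{Prop:a-deloop-b-borel}~(a). On the other side, $\aut_{*,\circ}(\MC_\bullet(L))$ is the path component of $\id$ in $\map_*(\MC_\bullet(L),\MC_\bullet(L))$, which by the $\Pi=L$ case of Corollary~\ref{Cor:ModelBasedMaps} is modelled by the convolution dg Lie algebra $\Hom(\bc(L),L)$ twisted at the Maurer--Cartan element $\tau_{\id}$ corresponding to $\id$; writing $L=\L(V)$ and retracting this twisted algebra onto the cogenerators $sV\subseteq\bc(L)$ identifies it, up to a degree shift, with $\Der(L)$ carrying the differential $D$, so that $\Der(L)\la1\ra$ is again a Lie model for $B\aut_{*,\circ}(\MC_\bullet(L))$. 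The delicate --- and I think hardest --- point is that $B\Phi$ actually realises this common identification rather than merely matching homotopy groups in the abstract. I would verify this by pushing the gauge formula of Proposition~\ref{Prop:GaugeOuterAction} through the identifications above: the orbit map $\exp(x)\mapsto\exp(x).\tau_{\id}$ linearises, along the cogenerators, to the derivation $\theta_x$, so on homotopy $\Phi$ is induced by the inclusion $\Der(L)\la1\ra\hookrightarrow\Der(L)$, which is a quasi-isomorphism on $1$-connected covers. Alternatively one may take the based analogue $\pi_*(\aut_{*,\circ}(\MC_\bullet(L)))\otimes\Q\cong H_*(\Der(L))$ of the Schlessinger--Stasheff--Tanr\'e computation \cite{SchlessingerStasheff,tanre83} as input, in which case only this last realisation statement remains to be checked.

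For the final stage I would assemble the triples. By Lemma~\ref{OutActOnHom} the outer action of $\g$ on $L$ induces an outer action $(\talpha,\txi)$ of $\g$ on $\Hom(\bc(\Pi),L)$, and $\txi=0$ because $\xi=0$; in particular the action of $\exp_\bullet(\g)$ on $\widehat\MC_\bullet(\Hom(\bc(\Pi),L))$ is basepoint preserving. Corollary~\ref{Cor:ModelBasedMaps} then supplies an $\exp_\bullet(\g)$-equivariant weak equivalence $\widehat\MC_\bullet(\Hom(\bc(\Pi),L))\to\map_*(\MC_\bullet(\Pi),\MC_\bullet(L))$, where $\exp_\bullet(\g)$ acts on the target through $\Phi$ followed by post-composition; together with $\id_*$ and $\Phi$ this is a morphism in $\mathscr K$ from $(*,\exp_\bullet(\g),\widehat\MC_\bullet(\Hom(\bc(\Pi),L)))$ to $(*,\aut_{*,\circ}(\MC_\bullet(L)),\map_*(\MC_\bullet(\Pi),\MC_\bullet(L)))$ all three of whose components are weak equivalences, so the two triples are weakly equivalent in $\mathscr K$. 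Proposition~\ref{barProperties}~(c) then yields a weak equivalence of the associated geometric bar constructions, and Proposition~\ref{Prop:a-deloop-b-borel}~(b) applied to the outer action $(\talpha,\txi)$ with $\txi=0$ shows that the untwisted semidirect product $\g\ltimes\Hom(\bc(\Pi),L)$ is a Lie model for $B(*,\exp_\bullet(\g),\widehat\MC_\bullet(\Hom(\bc(\Pi),L)))$, hence also for $B(*,\aut_{*,\circ}(\MC_\bullet(L)),\map_*(\MC_\bullet(\Pi),\MC_\bullet(L)))$.
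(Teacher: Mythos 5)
Your proposal is correct and follows essentially the same route as the paper: the Schlessinger--Stasheff--Tanr\'e identification of $\Der(L)\la1\ra$ as a Lie model for $B\aut_{*,\circ}(\MC_\bullet(L))$, the delooping statement of Proposition~\ref{Prop:a-deloop-b-borel}~(a), the equivariance supplied by Corollary~\ref{Cor:ModelBasedMaps}, and finally Proposition~\ref{Prop:a-deloop-b-borel}~(b). The only substantive difference is that you spell out the verification of the outer-action axioms and explicitly flag (correctly) that one must check that the particular map $\Phi$ induced by the gauge action realises the abstract equivalence of deloopings, rather than merely matching homotopy groups --- a point the paper's own proof passes over in silence.
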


\begin{proof}
 It follows by the theory  of Schlessinger-Stasheff \cite{SchlessingerStasheff} and Tanré \cite{tanre83} that if $L$ is a cofibrant dg Lie algebra, then  a Lie model for $B\aut_{*,\circ}(\MC_\bullet(L))$ is given by $\Der(L)\la1\ra$. By Proposition \ref{Prop:a-deloop-b-borel} \textit{(a)} it follows that $\exp_\bullet(\g)$ is weakly equivalent to $\aut_{*,\circ}(\MC_\bullet(L))$. This fact, together with Corollary \ref{Cor:ModelBasedMaps}, gives the equivalence of triples mentioned in the proposition. 
 \\
The statement regarding the Lie model is a consequence of Proposition \ref{Prop:a-deloop-b-borel} \textit{(b)}.
 \end{proof}

\section{Modelling Homotopy Automorphisms with Derivations}\label{der}
The ultimate goal of this paper is to study the rational homotopy  of $$B\aut_{A,\circ}(X)\simeq B(*,\aut_{*,\circ}(X),\map_*^\iota(A,X)),$$
which is a connected component in $ B(*,\aut_{*,\circ}(X),\map_*(A,X))$. The disconnected space $ B(*,\aut_{*,\circ}(X),\map_*(A,X))$ is modelled by the complete dg Lie algebra $$\Der(\L_X)\la1\ra\ltimes\Hom(\bc(\L_A),\L_X)$$
(see Proposition \ref{Prop:ModelBorel}). If $\h$ is a complete dg Lie algebra model for a disconnected space $W$, one may extract a dg Lie algebra model for a connected component $W^\tau\subset W$ by the following proposition:

\begin{prop}[\text{\cite[Theorem 5.5]{berglund15}}]\label{Lemma:TwistedLie} Let $\h$ be a complete dg Lie algebra and let $\tau$ be a Maurer-Cartan element in $\h$. The connected component of $\widehat \MC_\bullet(\h)$ that contains  $\tau$ is weakly equivalent to $\MC_\bullet(\h^\tau\la0\ra)$ where $\h^\tau$ is the dg Lie algebra whose underlying graded Lie algebra structure coincides with the one of $\h$ but with a twisted differential $\partial^\tau = \partial + \ad_\tau$. 
\end{prop}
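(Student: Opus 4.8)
The plan is to reduce to the case $\tau=0$ by a change of basepoint, and then to identify the component of the basepoint with the realization of a connective cover. The first step is to verify that $\h^\tau$ — which has the same underlying graded Lie algebra and the same filtration $\h=F^1\h\supseteq F^2\h\supseteq\cdots$ as $\h$, only with differential $\partial^\tau=\partial+\ad_\tau$ — is again a complete dg Lie algebra, so that $\widehat\MC_\bullet(\h^\tau)$ is defined. The only nonformal points are that $\partial^\tau$ squares to zero and preserves each $F^i\h$: the former is exactly the Maurer--Cartan equation for $\tau$, since a short computation gives $(\partial^\tau)^2=\ad_{\partial\tau+\frac12[\tau,\tau]}$, and the latter holds because $|\tau|=-1$ and $[\h,F^i\h]\subseteq F^i\h$. (One also checks, for the induced filtration, that the connective cover $\h^\tau\la0\ra$ is complete, so that $\MC_\bullet(\h^\tau\la0\ra)$ is likewise defined.)

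The conceptual heart is a change-of-basepoint isomorphism. I would show that $\eta\mapsto\eta+\tau$ — with $\tau$ read as $\tau\otimes1$ after tensoring with $\Omega_n$, and as its image in each nilpotent quotient $\h/F^r\h$ — defines an isomorphism of simplicial sets $\widehat\MC_\bullet(\h^\tau)\xrightarrow{\ \cong\ }\widehat\MC_\bullet(\h)$ carrying the vertex $0$ to $\tau$, with inverse $\mu\mapsto\mu-\tau$. This rests on the identity
\[
\partial^\tau\eta+\tfrac12[\eta,\eta]\;=\;\partial(\eta+\tau)+\tfrac12[\eta+\tau,\eta+\tau],
\]
which holds because $\tau$ is Maurer--Cartan (hence remains so after $-\otimes\Omega_n$ and in each $\h/F^r\h$): thus $\eta$ solves the Maurer--Cartan equation in $\h^\tau$ if and only if $\eta+\tau$ solves it in $\h$. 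The map is visibly compatible with faces, degeneracies, and the tower $\{\h/F^r\h\}_r$, so it is a simplicial isomorphism; consequently the connected component of $\tau$ in $\widehat\MC_\bullet(\h)$ is isomorphic to the connected component $\widehat\MC_\bullet(\h^\tau)^{(0)}$ of the vertex $0$ in $\widehat\MC_\bullet(\h^\tau)$.

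It remains to identify $\widehat\MC_\bullet(\h^\tau)^{(0)}$ with $\MC_\bullet(\h^\tau\la0\ra)$. The inclusion $j\colon\h^\tau\la0\ra\hookrightarrow\h^\tau$ is a morphism of complete dg Lie algebras, and since $(\h^\tau\la0\ra)_{-1}=0$ the Maurer--Cartan simplicial set $\MC_\bullet(\h^\tau\la0\ra)$ has a single vertex, hence (being Kan) is connected, so $\widehat\MC_\bullet(j)$ factors through $\widehat\MC_\bullet(\h^\tau)^{(0)}$. To see that the resulting map $\MC_\bullet(\h^\tau\la0\ra)\to\widehat\MC_\bullet(\h^\tau)^{(0)}$ is a weak equivalence, I would compare homotopy groups at the vertex $0$: under the standard identification $\pi_n(\widehat\MC_\bullet(-),0)\cong H_{n-1}(-)$ for $n\ge1$ the map becomes $H_{n-1}(\h^\tau\la0\ra)\to H_{n-1}(\h^\tau)$, which is an isomorphism for every $n\ge1$ since passing to the connective cover leaves homology unchanged in degrees $\ge0$; together with the (trivial) bijection on $\pi_0$ this yields the weak equivalence and finishes the proof. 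I expect this last step to be the main obstacle: making the identification of the homotopy groups of $\widehat\MC_\bullet$ with the homology of the dg Lie algebra precise and natural, and carrying the attendant $\varprojlim^1$/Mittag--Leffler bookkeeping through the defining tower of $\widehat\MC_\bullet$. By contrast, the change-of-basepoint step, though it carries the main idea, is essentially formal once completeness of $\h^\tau$ has been established.
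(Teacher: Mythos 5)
The paper gives no proof of this proposition; it is quoted verbatim from \cite[Theorem 5.5]{berglund15}. Your argument is a correct reconstruction of the standard proof of that result: the translation $\eta\mapsto\eta+\tau$ does give a simplicial isomorphism $\widehat\MC_\bullet(\h^\tau)\cong\widehat\MC_\bullet(\h)$ carrying $0$ to $\tau$ (your verification that $\tau$ being Maurer--Cartan makes $(\partial^\tau)^2=0$ and makes the two Maurer--Cartan equations match is exactly right, and everything is compatible with the tower $\{\h/F^r\h\}$ because the $F^r\h$ are ideals), and the component of $0$ is then compared with $\MC_\bullet(\h^\tau\la0\ra)$ on homotopy groups. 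Your self-diagnosis is also accurate: the one genuinely nontrivial input is the identification $\pi_n(\widehat\MC_\bullet(\g),0)\cong H_{n-1}(\g)$ for $n\ge1$, natural in $\g$, together with the Kan property of $\widehat\MC_\bullet$; this is precisely the main theorem of \cite{berglund15} and is legitimately quoted rather than reproved here, so with that citation in place your proof is complete and agrees with the source's.
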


We apply this proposition in order to get a Lie model for $B\aut_{A,\circ}(X)$:
\begin{prop}\label{Prop:TauTwist}
Let $\tau\in\Hom(\bc(\L_A),\L_X)$ be the Maurer-Cartan element given by the composition
$$\tau\colon\bc(\L_A) \xrightarrow{\pi_A}\L_A\xrightarrow{i}\L_X,$$
where $\pi_A\colon \bc(\L_A) \to\L_A$ is the universal twisting morphism and $i\colon \L_A\to\L_X$  is a cofibration that model the inclusion $\iota\colon A\hookrightarrow X$. A Lie model for 
$$B(*,\aut_{*,\circ}(X),\map_*^\iota(A,X))$$
is given by  
\begin{equation}\label{Formula:Twists}
\left(\Der(\L_X)\la1\ra\ltimes \Hom(\bc(\L_A),\L_X)\la0\ra\right)^{(0,\tau)}= \Der(\L_X)\la1\ra\ltimes_{\tau_*}\Hom^\tau(\bc(\L_A),\L_X)\la0\ra,
\end{equation}
where $\tau_*(\theta)= -(-1)^{|\theta|}\theta\circ \tau$.
\end{prop}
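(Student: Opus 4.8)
\emph{Plan of proof.} By Proposition \ref{Prop:ModelBorel} (with $L=\L_X$, $\Pi=\L_A$, using that $\MC_\bullet(\L_X)\simeq X_\Q$ and $\MC_\bullet(\L_A)\simeq A_\Q$), the complete dg Lie algebra $\h:=\Der(\L_X)\la1\ra\ltimes\Hom(\bc(\L_A),\L_X)$ — formed from the outer action with $\alpha(\theta,x)=\theta(x)$ and $\xi=0$, which by Lemma \ref{OutActOnHom} induces an outer action on $\Hom(\bc(\L_A),\L_X)$ again with $\tilde\xi=0$ — is a Lie model for $B(*,\aut_{*,\circ}(X),\map_*(A,X))$. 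Since $B(*,\aut_{*,\circ}(X),\map_*^\iota(A,X))$ is one of its connected components, and since by the Remark after Proposition \ref{Prop:RatGBC} we may argue rationally, the strategy is to invoke Proposition \ref{Lemma:TwistedLie}: I will (i) exhibit $(0,\tau)$ as a Maurer--Cartan element of $\h$ lying in the component corresponding to $\map_*^\iota(A,X)$, and (ii) compute the twisted dg Lie algebra $\h^{(0,\tau)}\la0\ra$ explicitly.

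For (i): because $\pi_A$ is the universal twisting morphism it is a Maurer--Cartan element of $\Hom(\bc(\L_A),\L_A)$, and pushing it forward along the dg Lie morphism $i$ shows that $\tau=i\circ\pi_A$ is a Maurer--Cartan element of $\Hom(\bc(\L_A),\L_X)$; hence $(0,\tau)$ is a Maurer--Cartan element of $\h$, the cross terms in the Maurer--Cartan equation vanishing since $\xi=0$ and $0.f=0$. To locate its component, note that under the equivalence of Corollary \ref{Cor:ModelBasedMaps} the element $\tau=i\circ\pi_A$ corresponds to the map $\MC_\bullet(i)\colon\MC_\bullet(\L_A)\to\MC_\bullet(\L_X)$, which represents the homotopy class of $\iota_\Q$ because $i$ is a Lie model for $\iota$. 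Moreover $\exp_\bullet(\g)$ is connected (as $\g=\Der(\L_X)\la1\ra$ is simply connected), so the dg Lie inclusion $\Hom(\bc(\L_A),\L_X)\hookrightarrow\h$, $f\mapsto(0,f)$, which models the fibre inclusion of the Borel construction of Proposition \ref{Prop:a-deloop-b-borel}(b), induces a bijection on sets of components. Therefore the component of $\widehat\MC_\bullet(\h)$ containing $(0,\tau)$ corresponds to $\map_*^\iota(A,X)$, and Proposition \ref{Lemma:TwistedLie} yields that $\h^{(0,\tau)}\la0\ra$ is a Lie model for $B(*,\aut_{*,\circ}(X),\map_*^\iota(A,X))$.

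For (ii): the underlying graded Lie algebra of $\h^{(0,\tau)}$ is that of $\h$, which is also the underlying graded Lie algebra of $\Der(\L_X)\la1\ra\ltimes_{\tau_*}\Hom^\tau(\bc(\L_A),\L_X)$ (twisting the differential, and replacing the convolution differential by its $\tau$-twist, leaves the bracket unchanged). For the differential, using the semidirect-product bracket formula together with $\xi=0$, $0.f=0$, $\theta.\tau=\theta\circ\tau$ and $|\tau|=-1$ one finds $\partial^{(0,\tau)}(\theta,f)=\bigl(D\theta,\ \partial_{\mathrm{conv}}(f)+[\tau,f]-(-1)^{|\theta|}\theta\circ\tau\bigr)$; here $\partial_{\mathrm{conv}}(f)+[\tau,f]$ is precisely the differential of the $\tau$-twisted convolution algebra $\Hom^\tau(\bc(\L_A),\L_X)$ and $-(-1)^{|\theta|}\theta\circ\tau=\tau_*(\theta)$, so this is exactly the differential of the twisted semidirect product with $\tau_*$ playing the role of ``$\xi$'' (in particular $(\alpha,\tau_*)$ is an outer action). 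Finally, since $\Der(\L_X)\la1\ra$ vanishes in degrees $\le0$, the $0$-connected cover of the semidirect product coincides with the semidirect product of $\Der(\L_X)\la1\ra$ and the $0$-connected cover of $\Hom^\tau(\bc(\L_A),\L_X)$, which gives the asserted formula. I expect the only genuinely delicate point to be the component identification in step (i) — tracing $(0,\tau)$ through the chain of equivalences of Section \ref{RHTgrpAction}; the differential computation in step (ii) is routine.
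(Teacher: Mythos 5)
Your proposal is correct and follows essentially the same route as the paper: identify $(0,\tau)$ as a Maurer--Cartan element whose component in the realization corresponds to $B(*,\aut_{*,\circ}(X),\map_*^\iota(A,X))$, apply Proposition \ref{Lemma:TwistedLie}, and verify the formula for the twisted differential. The paper's own proof is far terser (it asserts the component identification and leaves the equality \eqref{Formula:Twists} to be ``checked by hand''), so your explicit tracing of $\tau=i\circ\pi_A$ to $\MC_\bullet(i)$ and your computation of $\partial^{(0,\tau)}$ simply supply the details the paper omits.
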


\begin{proof}
The connected component of the Maurer-Cartan element $$(0,\tau)\in \Der(\L_X)\la1\ra\ltimes \Hom(\bc(\L_A),\L_X)$$ in the realization corresponds to the connected component $B(*,\aut_{*,\circ}(X),\map_*^\iota(A,X))$ in $B(*,\aut_{*,\circ}(X),\map_*(A,X))$. By Proposition \ref{Lemma:TwistedLie}, a Lie model for $$B(*,\aut_{*,\circ}(X),\map_*^\iota(A,X))$$ is given by the left hand-side of \eqref{Formula:Twists}. The equality \eqref{Formula:Twists}  may now be checked by hand.
\end{proof}

\begin{dfn}
Let $f\colon \h\to \Pi$ be a morphism of dg Lie algebras, define $\Der_f(\h,\Pi)$ to be the dg vector space of so called $f$-derivations from $\h$ to $\Pi$. An $f$-derivation is a linear map $\theta: \h\to \Pi$ that satisfies 
$$\theta[x,y] = [\theta(x),f(y)]+(-1)^{|\theta||x|} [f(x),\theta(y)].$$ 
\end{dfn}

\begin{prop}
Let $\tau\colon \bc\L_A\to\L_X$ be as in Proposition \ref{Prop:TauTwist}. The map 
$$s\pi_A^*\colon\Der_i(\L_A,\L_X)\la1\ra\to s(\Hom^\tau(\bc\L_A,\L_X)\la0\ra)$$  given by
$s\pi_A^*(\theta) = (-1)^{|\theta|+1}s(\theta\circ\pi_A)$ is a quasi-isomorphism.
\end{prop}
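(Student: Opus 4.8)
\emph{Plan.} The first reduction is to the untruncated statement: $s\pi_A^*$ preserves homological degree, the source in the proposition is the $1$-connected cover of $\Der_i(\L_A,\L_X)$, and one checks directly from the definitions that $s\bigl(\Hom^\tau(\bc\L_A,\L_X)\la0\ra\bigr)=\bigl(s\Hom^\tau(\bc\L_A,\L_X)\bigr)\la1\ra$. Since applying the $n$-connected cover to a degree-preserving quasi-isomorphism again gives a quasi-isomorphism, it suffices to prove that
$$s\pi_A^*\colon\Der_i(\L_A,\L_X)\longrightarrow s\Hom^\tau(\bc\L_A,\L_X)$$
is a quasi-isomorphism.

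Next I would verify that $s\pi_A^*$ is a chain map. Writing $\partial^\tau=\partial+[\tau,-]$ for the differential of $\Hom^\tau(\bc\L_A,\L_X)$ and using $\tau=i\circ\pi_A$, the graded cocommutativity of the coproduct of $\bc\L_A$, and the defining identity of an $i$-derivation (which converts $\ell_{\L_X}\circ(i\ot\theta)$ applied to $(\pi_A\ot\pi_A)\Delta$ into $\theta\circ\ell_{\L_A}$ up to a symmetric summand), one obtains $[\tau,\theta\circ\pi_A]=\pm\tfrac12\,\theta\circ[\pi_A,\pi_A]$. The Maurer--Cartan equation $\partial(\pi_A)+\tfrac12[\pi_A,\pi_A]=0$ for the universal twisting morphism then makes the $\theta\circ\pi_A\circ d_{\bc\L_A}$-type terms cancel, leaving
$$\partial^\tau(\theta\circ\pi_A)=\pm\bigl(d_{\L_X}\circ\theta-(-1)^{|\theta|}\theta\circ d_{\L_A}\bigr)\circ\pi_A=\pm\,D(\theta)\circ\pi_A,$$
where $D$ is the differential of $\Der_i(\L_A,\L_X)$. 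The sign $(-1)^{|\theta|+1}$ is exactly what is needed so that, after applying $s$, this reads as the chain-map identity. This step is elementary but sign-heavy.

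The heart of the proof is that $s\pi_A^*$ is a quasi-isomorphism, which I would establish by a filtration argument. Filter $\L_A=\L(V_A)$ and $\L_X=\L(V_X)$ by bracket length. Since $A$ and $X$ are simply connected of finite type, $\L_A$ and $\L_X$ are connected and degree-wise finite-dimensional, so the induced filtrations on $\Der_i(\L_A,\L_X)\cong\Hom(V_A,\L_X)$ and on $s\Hom^\tau(\bc\L_A,\L_X)$ are complete and exhaustive (this is where the completeness framework of Section~\ref{Subsec:Prelim} enters), and $s\pi_A^*$ is filtration-preserving. On the associated graded several things simplify at once: the twisting term $[\tau,-]$ disappears, because $[\tau,f]$ takes values in $[\L_X,\operatorname{im}f]$ and hence strictly raises the bracket-length filtration of $\L_X$; the differentials of $\L_A$ and $\L_X$ are replaced by their linear parts; the non-linear part of $i$ drops out; and the universal twisting morphism $\pi_A$ becomes the canonical projection $p\colon\bc\L(V_A)\to V_A$ onto the cogenerators of bracket length one. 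Thus, up to the suspension isomorphism, $s\pi_A^*$ induces on the associated graded the map
$$\Hom\bigl(V_A,\L(V_X)\bigr)\longrightarrow s\Hom\bigl(\bc\L(V_A),\L(V_X)\bigr),\qquad \theta\longmapsto\pm\, s(\theta\circ p),$$
with all differentials now linear. It remains to note that $p$ is a quasi-isomorphism: this is the classical computation of the reduced Chevalley--Eilenberg homology of a free dg Lie algebra, $\widetilde H_*^{CE}(\L(W))\cong sW$ realized by projection onto cogenerators, proved by comparing the Chevalley--Eilenberg complex of $\L(W)$ with the bar construction of the tensor algebra $U\L(W)=T(W)$ and using the length-two free resolution $0\to T(W)\ot W\to T(W)\to\Q\to 0$. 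Since we work over the field $\Q$ and $\bc\L(V_A)$ is connective and degree-wise finite-dimensional, $\Hom(-,\L(V_X))$ carries this quasi-isomorphism to a quasi-isomorphism, so $s\pi_A^*$ is a quasi-isomorphism on associated gradeds; by completeness of the filtrations, $s\pi_A^*$ is a quasi-isomorphism.

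I expect the main obstacles to be (a) the sign bookkeeping in the chain-map verification, and (b) checking carefully that the bracket-length filtrations are genuinely complete and that the associated graded of $s\pi_A^*$ really is the map $\theta\mapsto\pm s(\theta\circ p)$ above, so that the associated-graded quasi-isomorphism lifts. A shorter but more machinery-heavy alternative would be to identify $s\Hom^\tau(\bc\L_A,\L_X)$ with the complex $\Der_{i\circ\varepsilon}(\Omega\bc\L_A,\L_X)$ of $i\circ\varepsilon$-derivations out of the cobar construction, where $\varepsilon\colon\Omega\bc\L_A\xrightarrow{\ \sim\ }\L_A$ is the bar--cobar counit; under that identification $s\pi_A^*$ becomes precomposition with $\varepsilon$, and one concludes from the homotopy invariance of the derivation functor on cofibrant dg Lie algebras.
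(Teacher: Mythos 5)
Your proposal is correct, and its overall architecture is the same as the paper's: first verify that $\pi_A^*$ is a chain map using the Maurer--Cartan identity $\pi_A\circ d=-d\circ\pi_A-\tfrac12[\pi_A,\pi_A]$ together with the $i$-derivation identity (your observation that $[\tau,\theta\circ\pi_A]=\pm\tfrac12\,\theta\circ[\pi_A,\pi_A]$ by cocommutativity is exactly the cancellation carried out in the text), and then run a filtration argument whose ultimate input is the acyclicity statement $H_{p+1}(\bc\L_A)\cong H_p(Q(\L_A))$. The genuine difference is in the filtration and in how that input is obtained. The paper filters both sides by the \emph{internal degree} of the elements of $\L_A$ (resp.\ $\bc\L_A$) on which a map vanishes; the $E_2$-pages are then literally $\Hom(H_p(Q\L_A),H_q(\L_X))$ and $\Hom(H_{p+1}(\bc\L_A),H_q(\L_X))$, and the comparison is Lemma~\ref{CEindec}, cited from \cite[Proposition 22.8]{felixrht}. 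You filter by \emph{bracket length}, which linearizes the differentials on the associated graded and reduces the key input to the classical computation of the Chevalley--Eilenberg homology of a free Lie algebra on a complex via $U\L(W)=T(W)$; in effect you re-prove the cited lemma, at the cost of a slightly more delicate convergence check. Your preliminary reduction to the untruncated map, via $s\bigl(M\la0\ra\bigr)=(sM)\la1\ra$ and the fact that connected covers preserve degree-preserving quasi-isomorphisms, is valid and is a step the paper does not take (it keeps the truncations throughout). Two points in your sketch need the care you yourself flag: to make the associated graded of $\Hom^\tau(\bc\L_A,\L_X)$ come out with \emph{all} differentials linearized you must use a combined weight (bracket length in the target against word length in $\bc\L_A$), and the completeness and degreewise boundedness of that filtration rest on the connectivity and finite-type hypotheses; also note that since $i$ is a free map it is the inclusion of a free factor and already preserves bracket length, so there is no ``non-linear part of $i$'' to discard. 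Both routes work; the paper's choice of filtration buys an immediately computable $E_2$-page and an off-the-shelf reference, while yours is self-contained and closer in spirit to the bar--cobar argument you mention as an alternative.
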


\begin{proof}
It is enough to show that $\pi_A^*\colon \Der_i(\L_A,\L_X)\la1\ra\to \Hom^\tau(\bc\L_A,\L_X)\la0\ra$, $\pi_A^*(\theta)=\break (-1)^{|\theta|+1}\theta\circ\pi_A$ induces isomorphisms in shifted homology, i.e.
$$H(\pi_A^*)\colon H_{p+1}( \Der_i(\L_A,\L_X)\la 1\ra)\xrightarrow \cong H_p(\Hom^\tau(\bc\L_A,\L_X)\la 0\ra).$$
We will use that universal twisting morphism $\pi_A$ satisfies the Maurer-Cartan equation 
\begin{equation}\label{MCeqn}\pi_A\circ d = -d\circ\pi_A-\frac12[\pi_A,\pi_A].\end{equation}
We have that
\begin{align*}
D(\pi_A^*(\theta))& = (-1)^{|\theta|+1}d\circ\theta\circ \pi_A -\theta\circ \pi_A\circ d + (-1)^{|\theta|+1}[\tau,\theta\circ\pi_A]
\\ & \overset{\eqref{MCeqn}}=(-1)^{|\theta|+1} d\circ\theta\circ \pi_A+ (-1)^{|\theta|+1}[i\circ\pi_A,\theta\circ \pi_A]-\theta\circ\big(-d\circ\pi_A-\frac12[\pi_A,\pi_A]\big)
\\& = [\theta \text{ is an $i$-derivation.}]
\\ & = (-1)^{|\theta|+1}d\circ\theta\circ \pi_A+ (-1)^{|\theta|+1}[i\circ\pi_A,\theta\circ \pi_A]+\theta\circ d\circ\pi_A\\&\qquad+\left(\frac12[\theta\circ \pi_A,i\circ \pi_A] +(-1)^{|\theta|} \frac12[i\circ \pi_A,\theta\circ \pi_A]
\right)
\\ & = (-1)^{|\theta|+1}d\circ\theta\circ \pi_A+ (-1)^{|\theta|+1}[i\circ\pi_A,\theta\circ \pi_A]+\theta\circ d\circ\pi_A+(-1)^{|\theta|}[i\circ\pi_A,\theta\circ\pi_A]
\\ & = (-1)^{|\theta|+1}d\circ\theta\circ \pi_A+\theta\circ d\circ\pi_A
\\& =- \pi_A^*(D(\theta)),
\end{align*}
proving that $\pi_A^*$ is a chain map.

Now we prove that $\pi_A^*$ is a quasi-isomorphism (up to a degree shift). If $L$ is a connected dg Lie algebra, let $Q(L)=L/[L,L]$ denote the chain complex of the indecomposable elements in $L$.
\begin{lemma}[\text{\cite[Proposition 22.8]{felixrht}}]\label{CEindec} The composition
$$\bc\L_A\xrightarrow{\pi_A}\L_A\to Q(\L_A)$$
induces isomorphisms in homology $$H_{p+1}(\bc\L_A)\xrightarrow\cong H_{p}(Q(\L_A)).$$
\end{lemma}

Now we consider the complete filtration $\Der_i(\L_A,\L_X)\la 1\ra = F^1\supseteq F^2\supseteq\cdots$, where $F^p$ is the subcomplex of $i$-derivations that vanish on elements of degree $<p$, and the complete filtration $\Hom^\tau(\bc \L_A,\L_X)\la 0\ra = \hat F^1\supseteq \hat F^2\supseteq\cdots$ where $\hat F^p$ is the the subcomplex of linear maps that vanish on elements of degree $<p+1$. 

With respect to these filtrations, $\pi_A^*$ becomes a map of filtered complexes and induces a map of spectral sequences. We have that the first filtration gives rise to a first quadrant spectral sequence with $E_2$-term
$$E_2^{p,-q} = \Hom(H_p(Q(\L_A)),H_q(\L_X)),$$
and the second filtration gives rise to a first quadrant  spectral sequence with $\hat E_2$-term
$$\hat E_2^{p+1,-q}=\Hom(H_{p+1}(\bc\L_A),H_q(\L_X)).$$

By Lemma \ref{CEindec}, the induced map $E_2(\pi_A^*)\colon E_2^{p,-q}\to\hat E_2^{p+1,-q}$ is an isomorphism. The comparison theorem (see for instance \cite{weibel}) gives now that $\pi_A^*$ is indeed a quasi-isomorphism up to a degree shift.
\end{proof}

We are left to show that there exists a weak equivalence of dg Lie algebras
$$\Der(\L_X\|\L_A)\la1\ra\simeq \Der(\L_X)\la1\ra\ltimes_{\tau_*}\Hom^\tau(\bar{\mathcal C}\L_A,\L_X)\la0\ra$$
in order to complete the proof of the main theorem, Theorem \ref{mainThm}.

\begin{prop}\label{Prop:Zeta} Let $i\colon \L_A\to\L_X$ be a cofibration (i.e. a free map). Then  we may view $\L_A$ as a subalgebra of      $\L_X$. Let $\Der(\L_X\|\L_A)$ be the dg Lie algebra of derivations on $\L_X$ that vanish on $\L_A$. The map
$$\zeta\colon\Der(\L_X\|\L_A)\la1\ra\to \Der(\L_X)\la1\ra\ltimes_{\tau_*}\Hom^\tau(\bar{\mathcal C}\L_A,\L_X)\la0\ra$$
given by inclusion into the first term is a quasi-isomorphism of dg Lie algebras.
\end{prop}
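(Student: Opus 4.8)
The plan is to exhibit $\zeta$ as the inclusion of a sub-dg-Lie algebra and then show the quotient is acyclic, so that $\zeta$ is a quasi-isomorphism. First I would check that $\zeta$ is a well-defined morphism of dg Lie algebras: the target is the twisted semidirect product, whose bracket on the $\Der(\L_X)\la1\ra$-factor is just the derivation bracket and whose $\Der(\L_X)\la1\ra$-component of the twisted differential is $D(\theta)+\tau_*\theta = D(\theta)-(-1)^{|\theta|}\theta\circ\tau$; but if $\theta$ vanishes on $\L_A$ then $\theta\circ\tau = \theta\circ i\circ\pi_A = 0$ since $\tau$ factors through $\L_A\subseteq\L_X$, so the twist term vanishes on the image of $\zeta$ and $\zeta$ is a chain map. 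Compatibility with brackets is immediate since $\zeta$ lands in the sub-dg-Lie algebra $\Der(\L_X)\la1\ra\times 0$.

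Next I would set up the short exact sequence of chain complexes
$$0\to \Der(\L_X\|\L_A)\la1\ra \xrightarrow{\zeta} \Der(\L_X)\la1\ra\ltimes_{\tau_*}\Hom^\tau(\bc\L_A,\L_X)\la0\ra \xrightarrow{p} Q \to 0,$$
and identify the cokernel $Q$. The key observation is that a derivation $\theta$ on $\L_X$ is determined by its restriction to the generators of $\L_X$, and since $i\colon\L_A\to\L_X$ is a free map we may choose generators of $\L_X$ as a union of generators of (the image of) $\L_A$ together with a complementary set $W$; thus there is a short exact sequence of (graded) vector spaces $0\to\Der(\L_X\|\L_A)\to\Der(\L_X)\to\Der_i(\L_A,\L_X)\to 0$, where the last map is restriction along $i$, and it is a standard fact (as in Tanré, \cite{tanre83}, or \cite{felixrht}) that this is in fact a short exact sequence of dg vector spaces compatible with the relevant differentials. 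Combining this with the structure of the semidirect product, the cokernel $Q$ is, up to the appropriate connected covers and degree shift, the mapping cone of the map $\pi_A^*\colon \Der_i(\L_A,\L_X)\la1\ra\to\Hom^\tau(\bc\L_A,\L_X)\la0\ra$ from the preceding proposition — more precisely $Q\cong \Der_i(\L_A,\L_X)\la1\ra\oplus\Hom^\tau(\bc\L_A,\L_X)\la0\ra$ with differential twisted by $\pi_A^*$, which is exactly the cone on $\pi_A^*$ (with a sign). I will have to be careful matching the connected-cover truncations $\la0\ra$ and $\la1\ra$ on both sides so that the cone description is exact, but this is bookkeeping.

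Since the preceding proposition shows $\pi_A^*$ is a quasi-isomorphism (up to degree shift), its mapping cone $Q$ is acyclic. The long exact sequence in homology associated to the short exact sequence above then forces $\zeta$ to be a quasi-isomorphism, completing the proof. The main obstacle I anticipate is not any deep idea but rather getting the differentials, signs, and the various $\la0\ra$/$\la1\ra$ truncations to line up so that the cokernel is literally the cone on $\pi_A^*$; in particular one must verify that passing to $1$-connected covers is compatible with the short exact sequence $0\to\Der(\L_X\|\L_A)\to\Der(\L_X)\to\Der_i(\L_A,\L_X)\to 0$ and that no homology is lost or created in low degrees, which I would handle by a direct degree-wise inspection together with the fact that all the Lie algebras involved are connected (even simply connected for the spaces in question) so the truncations only affect a bounded range.
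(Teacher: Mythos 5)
Your argument is correct and ultimately rests on the same two inputs as the paper's proof --- the short exact sequence $0\to \Der(\L_X\|\L_A)\to\Der(\L_X)\to\Der_i(\L_A,\L_X)\to 0$ coming from freeness of $i$, and the quasi-isomorphism $\pi_A^*$ of the preceding proposition --- but the execution differs in a way worth noting. The paper forms the mapping cone of the projection $\rho$ onto $\Der(\L_X)\la 1\ra$, shows that $s(\Hom^\tau(\bc(\L_A),\L_X)\la 0\ra)\to \mathrm{cone}(\rho)$ is an equivalence, and then compares two cofiber sequences via a diagram that commutes only up to an explicit chain homotopy, finishing with the five lemma. You instead note that $\zeta$ is injective and identify its cokernel directly as $\Der_i(\L_A,\L_X)\la 1\ra\oplus\Hom^\tau(\bc(\L_A),\L_X)\la 0\ra$ with differential twisted by $\pi_A^*$; the key check here (which you correctly dismiss as bookkeeping, but should write out) is that $\tau_*(\tilde\psi)=-(-1)^{|\tilde\psi|}\tilde\psi\circ i\circ\pi_A=\pi_A^*(\psi)$ depends only on the restriction $\psi=\tilde\psi\circ i$, so the induced differential on the quotient is well defined and is exactly the (shifted) cone differential. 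Acyclicity of this cone plus the long exact sequence then gives the result, with no homotopy-commutative diagram needed --- a genuinely cleaner route. Your verification that $\zeta$ is a chain map is likewise more direct than the paper's (which detours through the induced coderivation on $\bc(\L_X)$): since $\tau=i\circ\pi_A$ by definition, $\theta\circ\tau=\theta\circ i\circ\pi_A=0$ is immediate. The one point you flag but leave to ``degree-wise inspection'' is the only place where real care is required: the surjectivity of $\Der(\L_X)\la 1\ra\to\Der_i(\L_A,\L_X)\la 1\ra$ in degree $1$, i.e.\ that a closed $i$-derivation of degree $1$ lifts to a closed derivation of $\L_X$. The paper asserts the same statement without proof, so this is a shared elision rather than a defect of your approach, but your cokernel identification (and hence the acyclicity of $Q$ in low degrees) genuinely depends on it, so it deserves an explicit argument.
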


\begin{proof} It is straightforward to show that $\zeta$ is a map of Lie algebras. We want to show that $\zeta$ is a chain map. This is equivalent to having that $\tau_*|_{\Der(\L_X\|\L_A)\la1\ra} = 0$.
We have that $\tau\colon \bc(\L_A)\to \L_X$ factors through $\bc(\L_X)$
$$\xymatrix{\bc(\L_A)\ar[rr]^\tau\ar[rd]_{\bc(i)}&&\L_X\\
& \bc(\L_X)\ar[ru]_{\pi_X}}$$
where $\pi_X\colon \bc(\L_X)\to \L_X$ is the universal twisting morphism. Given a derivation $\theta\in\Der(\L_X)$, it induces a coderivation $\Theta\in \mathrm{Coder}(\bc(\L_X))$ given by $$\Theta(sx_1\wedge\cdots\wedge sx_k) = \sum\pm sx_1\wedge\cdots\wedge s\theta(x_i)\wedge\cdots\wedge sx_k$$ 
so that $\pi_X\circ \Theta =(-1)^{|\theta|}\theta\circ \pi_X$.
Now assume that $\theta\in\Der(\L_X\|\L_A)\la1\ra$, i.e. $\theta\circ i = 0$, then we have that $\Theta\circ\bc(i) = 0$ and in particular
\begin{equation*}
\tau_*(\theta) = \theta\circ \tau = \theta\circ \pi_X\circ \bc(i)= (-1)^{|\theta|}\pi_X\circ\Theta\circ\bc(i) = 0
\end{equation*}
This gives that $\zeta\colon \Der(\L_X\|\L_A)\la1\ra\to  \Der(\L_X)\la1\ra\ltimes_{\tau_*}\Hom^\tau(\bar{\mathcal C}\L_A,\L_X)\la0\ra$ is a chain map, and therefore also a map of dg Lie algebras.

Now we show that $\zeta$ is a quasi-isomorphism. In the model category of chain complexes we have that the homotopy cofiber of the projection map
$$\rho\colon \Der(\L_X)\la1\ra\ltimes_{\tau_*}\Hom^\tau(\bc(\L_A,\L_X)\la0\ra\to \Der(\L_X)\la1\ra$$
is the mapping cone $\Der(\L_X)\la1\ra\oplus s(\Der(\L_X)\la1\ra\ltimes_{\tau_*}\Hom^\tau(\bc(\L_A),\L_A)\la0\ra)$, denoted by $cone(\rho)$, equipped with the differential given by $d(\theta,s\psi,s\eta)=(d\theta-\psi,-s\partial^\tau(\psi,\eta))$.
In particular we have that 
$$\Der(\L_X)\la1\ra\ltimes_{\tau_*}\Hom^\tau(\bc(\L_A,\L_X)\la0\ra\xrightarrow{\rho} \Der(\L_X)\la1\ra\to cone(\rho)$$
is equivalent to a homotopy cofibration.

Moreover we have that there is a short exact sequence of chain complexes
$$0\to s(\Hom^\tau(\bc(\L_A),\L_A))\la0\ra)\xrightarrow{incl}cone(\rho)\to \Der(\L_X)\la1\ra\oplus s\Der(\L_X)\la1\ra\to0$$

We have that $\Der(\L_X)\la1\ra\oplus s(\Der(\L_X)\la1\ra)\simeq 0$ (since $\Der(\L_X)\la1\ra\oplus s(\Der(\L_X)\la1\ra$ is the mapping cone on the identity map), so it follows that $$s(\Hom^\tau(\bc(\L_A),\L_A)\la 0\ra)\xrightarrow{incl}cone(\rho)$$ is a homotopy equivalence. 
It follows now that the composition of homotopy equivalences
$$\alpha\colon \Der_i(\L_A,\L_X)\la1\ra\xrightarrow{s\pi_A^*}s(\Hom^\tau(\bc(\L_A),\L_X)\la0 \ra)\xrightarrow{incl} cone(\rho)$$ is a homotopy equivalence.
Since $i\colon \L_A\to \L_X$ is a free map, the restriction map $\Der(\L_X)\la1 \ra\to \Der_i(\L_A,\L_X)\la1 \ra$ is onto with kernel $\Der(\L_X\|\L_A)\la 1\ra$. In particular we have a short exact sequence 
$$0\to \Der(\L_X\|\L_A)\la 1\ra\to\Der(\L_X)\la1 \ra\to \Der_i(\L_A,\L_X)\la 1\ra\to 0.$$
Now consider the following (non-commuting) diagram with rows being homotopy cofibrations
\begin{equation}\label{diagram}
\xymatrix{
\Der(\L_X\|\L_A)\la 1\ra\ar[r]\ar[d]_{\zeta} & \Der(\L_X)\la 1\ra\ar[r]\ar@{=}[d] & \Der_i(\L_A,\L_X)\la 1\ra\ar[d]^{-\alpha}_{\wr} \\
\Der(\L_X)\la1 \ra\ltimes_{\tau_*}\Hom^\tau(\bar{\mathcal C}\L_A,\L_X)\la0 \ra\ar[r] & \Der(\L_X)\la 1\ra\ar[r] & cone(\rho)
}\end{equation}

\begin{claim}
 The diagram above commutes up to homotopy
\end{claim}
\begin{proof}
The left square commutes strictly, so we are left to show that the right square commutes up to homotopy.
In other words, we want to show that 
$$\Phi,\Psi\colon \Der \L_X\la 1\ra\to cone(\rho) =  \Der(\L_X)\la1 \ra\oplus s(\Der(\L_X)\la1 \ra\ltimes_{\tau_*}\Hom^\tau(\bc(\L_A),\L_A)\la 0\ra)$$
given by $\Phi(\theta) = (\theta,0,0)$ and $\Psi(\theta) = (0,0,-s\pi_A^*(\theta\circ i))$
are homotopic.

Let $H\colon\Der(\L_X)\la1 \ra\to \Der(\L_X)\la1 \ra\oplus s(\Der(\L_X)\la1 \ra\ltimes_{\tau_*}\Hom^\tau(\bc(\L_A),\L_A)\la0 \ra)$ be given by 
$$H(\theta)=(0,s\theta,0)$$
We have that 
$$(dH+Hd)(\theta) = d(0,s\theta,0)+H(d(\theta))$$
$$= (-\theta,-sd(\theta),-s\tau_*(\theta))+(0,sd(\theta),0)=(-\theta,0,-s\tau_*(\theta))$$
which is $(\Psi-\Phi)(\theta)$.
\end{proof}
Now as we have that \eqref{diagram} commutes up to homotopy, it induces a (strict) map of the long exact sequences associated to the cofibrations. Since $\alpha\colon \Der_i(\L_A,\L_X)\la 1\ra\to cone(\rho)$ and $\id\colon \Der(\L_X)\la1 \ra\to\Der(\L_X)\la 1\ra$ induces isomorphisms in homology, it follows by the five lemma that $\zeta$ also induces isomorphisms in homology. 
\end{proof}

\section{Examples}
\begin{exmp} A Lie model for $\C P^k$, $k\geq 1$ is given by the cofibrant dg Lie algebra $\L(x_1,\dots,x_{k})$ on the free graded vector space $\Span_\Q(x_1,\dots, x_{k})$ where $|x_i|=2i-1$ and where the differential is given by $d(x_i) =\frac12\sum_{p+q=i}[x_p,x_q]$ (see \cite[§24.(f)]{felixrht}).
The  inclusion of $\C P^k\to \C P^n$, $1\leq k<n$ is modelled by the  free map induced by the inclusion
$$\Span_\Q(x_1,\dots,x_{k})\hookrightarrow \Span_\Q(x_1,\dots,x_{n}).$$

In particular we have that the underlying dg vector space of $\Der(\L_{\C P^n}\|\L_{\C P^k})$ is isomorphic to $$\Hom(\Span_\Q(x_{k+1},\dots,x_n),\L(x_1,\dots,x_n)),$$
where the differential on a map $f$ of homogeneous degree $|f|$ is given by 
$$d(f)(x_q)= d\circ f(x_q)-(-1)^{|f|} \frac12\sum_{i+j=q}[f(x_i),x_j]-\frac12\sum_{i+j=q}[x_i,f(x_j)],$$
where we set $f(x_i)=0$ if $i\leq k$. We observe that if $n=k+1$ then only first term in the differential above survives. In this particular case we have an isomorphism of chain complexes
$$\Der(\L_{\C P^{k+1}}\|\L_{\C P^k}) = s^{-2k-1}\L_{\C P^{k+1}}.$$
Hence, we see that
$$\pi_{*+2k+1}^\Q(B\aut_{\C P^k}(\C P^{k+1}))=\pi_*^\Q(\C P^{k+1}).$$
\end{exmp}

\begin{exmp} Every simply connected topological space $X$ admits a minimal Lie model of the form $(\L(s^{-1}\tilde H_*(X;\Q)),d)$, where the generating vector space is the desuspension of the reduced rational homology of $X$ (see \cite[Chapter 24]{felixrht}). In particular, a minimal Lie model for the sphere $S^{n-1}$ is given by $\L(u)$ where $|u|=n-2$.

If $X$ is an $n$-dimensional simply connected compact manifold with boundary $\partial X \cong S^{n-1}$, then for every basis $B=\{x_1,\dots,x_m\}$ of $s^{-1}\tilde H_*(X;\Q)$ there exists a `dual basis' $B^{\#}=\{x_1^{\#},\dots,x_m^{\#}\}$ such that $|x_i^{\#}|+|x_i|=n-2$ and such that the inclusion $S^{n-1}\to X$ is modelled by 
the dg Lie algebra morphism
$$\L(u)\to \L(s^{-1}\tilde H_*(X;\Q)),\qquad u\mapsto \omega=\frac12\sum_{i=1}^m[x_i^{\#},x_i]$$
(see \cite{stasheff83} and \cite[\S3.5]{BM14} for details). 

Note that the dg Lie algebra map above is not a cofibration. In order to be able to apply Theorem \ref{mainThm}, one needs to replace the map by a cofibration.
This can be done by adding generators $u$ and $v$ to the Lie model of $X$ and define $d(u) =0$ and $d(v) = u-\omega$. Then the inclusion
$\L(u)\to \L(s^{-1}\tilde H_*(X),u,v)$ is a cofibration that models the inclusion. Theorem \ref{mainThm} shows that $\Der(\L(s^{-1}\tilde H_*(X),u,v)\| u)$ is a Lie model for the universal cover of $B\aut_\partial(X)$. In \cite{BM14}, the authors go further and show that $\Der(\L(s^{-1}\tilde H_*(X),u,v)\| u)$ is quasi-isomorphic to $\Der(\L(s^{-1}\tilde H_*(X))\| \omega)$. However, in general, if $j\colon \L_A\to \L_X$ models a cofibration $A\subset X$ where $\L_A$ and $\L_X$ are cofibrant dg Lie algebras, but where $j$ is not a cofibration, then it is not necessarily true that the Lie subalgebra of $\Der(\L_X)$ of derivations that vanish on the image of $j$ is a Lie model for the universal cover of $B\aut_A(X)$. We will see this in the next example.
\end{exmp}

\begin{exmp}
In Theorem \ref{mainThm} it is required that the Lie algebra map $i\colon\L_A\to\L_X$  that models the inclusion $A\subset X$ is a cofibration. In this example we show that this condition is necessary.

Consider the inclusion $S^3\subset D^4$. A cofibration between cofibrant dg Lie algebras  that models the inclusion is given by 
$$(\L(u),|u|=2) \to (\L(u,v),\ dv=u,\ |u|=2,\ |v|=3).$$ Hence we know that a Lie model for $B\aut_{S^3,\circ}(D^4)$ is given by $\Der(\L(u,v)\|\L(u))$ which one easily shows is homotopically trivial.  Let us now model the inclusion  $S^3\subset D^4$ by a Lie map which is not a cofibration. We let a cofibrant model for $S^3$ be given by the abelian dg Lie algebra
$$\L_{S^3} = (\L(u),\ |u|=2, du=0).$$ Since $D^4$ is contractible, any homotopically trivial dg Lie algebra is a Lie model for $D^4$, and any map from  $\L_{S^3}$ to that Lie model of $D^4$ is a model for the inclusion $S^3\subset D^4$. We let 
$$\L_{D^4} = (\L(a,b),\ |a|=1,\ |b|=2,\ db=a)$$
be Lie model for $D^4$ and we let the inclusion $S^3\subset D^4$ be modelled by the map $i\colon \L(u)\to \L(a,b)$, $i(u)=[a,a]$. Now we show that 
$\Der(\L(a,b)\|[a,a])$ is not weakly equivalent to the trivial dg Lie algebra.
Let $\ad_a\in \Der(\L(a,b)\|[a,a])$ be given by $\ad_a(x)= [a,x]$ ($\ad_a$ vanishes on $[a,a]$ since $[a,[a,a]]=0$ by the graded Jacobi identity). Straightforward calculations give that $\ad_a$ is a cycle in $\Der(\L(a,b)\|[a,a])$. Now we show that $\ad_a$ is  not a boundary in $\Der(\L(a,b)\|[a,a])$. 
We have that any $g\in \Der(\L(a,b))$ of degree 2 is determined by its images on the  generators. We have that $g(a) = \alpha[b,a]$  for some $\alpha\in\Q$ since the degree three part of $\L(a,b)$ is spanned by $[b,a]$. We also have that  $g(b) = \beta [b,[a,a]]$ for some $\beta\in\Q$ since the degree four part of $\L(a,b)$ is spanned by  $[b,[a,a]]$.
Solving the equation $Dg=\ad_a$ gives that $\alpha=1$ and that $\beta$ can be chosen arbitrary. However, we get that 
$$g[a,a] = 2[[b,a],a]\neq 0$$
showing that $g\not\in \Der(\L(a,b)\|[a,a])$, so $\ad_a$ is not a boundary in $\Der(\L(a,b)\|[a,a])$. We conclude that $\Der(\L(a,b)\|[a,a])$ is not homotopically trivial, and therefore not a Lie model for $B\aut_{S^3,\circ}(D^4)$.

\end{exmp}

\bibliographystyle{amsalpha}
\bibliography{references}

\providecommand{\bysame}{\leavevmode\hbox to3em{\hrulefill}\thinspace}
\providecommand{\MR}{\relax\ifhmode\unskip\space\fi MR }
% \MRhref is called by the amsart/book/proc definition of \MR.
\providecommand{\MRhref}[2]{%
  \href{http://www.ams.org/mathscinet-getitem?mr=#1}{#2}
}
\providecommand{\href}[2]{#2}
\begin{thebibliography}{{Gre}17}

\bibitem[Ber15]{berglund15}
A.~Berglund, \emph{{Rational homotopy theory of mapping spaces via Lie theory
  for $L_\infty$-algebras}}, Homology, Homotopy and Applications \textbf{17}
  (2015), no.~2, 343--369.

\bibitem[Ber17]{berglund17}
\bysame, \emph{{Rational models for automorphisms of fiber bundles}}, ArXiv
  e-prints (2017).

\bibitem[BM13]{BM12}
A.~{Berglund} and I.~{Madsen}, \emph{{Homological stability of diffeomorphism
  groups}}, Pure and Applied Mathematics Quarterly \textbf{9} (2013), no.~1,
  1--48.

\bibitem[BM14]{BM14}
\bysame, \emph{{Rational homotopy theory of automorphisms of manifolds}}, ArXiv
  e-prints (2014).

\bibitem[FHT01]{felixrht}
Y.~Félix, S.~Halperin, and J-C. Thomas, \emph{{Rational Homotopy Theory}},
  Graduate Texts in Mathematics, vol. 205, Springer, New York, 2001.

\bibitem[{Gre}17]{grey}
M.~{Grey}, \emph{{Rational Homological Stability for Automorphisms of
  Manifolds}}, ArXiv e-prints (2017).

\bibitem[Man04]{manettidef}
M.~Manetti, \emph{{Lectures on deformations of complex manifolds }}, Rendiconti
  di Matematica \textbf{24} (2004), no.~7, 1--183.

\bibitem[May72]{mayiterated}
J.P. May, \emph{{The Geometry of Iterated Loop Spaces}}, Lecture Notes in
  Mathematics, vol. 271, Springer-Verlag, Berlin, 1972.

\bibitem[May75]{may75}
\bysame, \emph{{Classifying spaces and fibrations}}, Memoirs of the American
  Mathematical Society \textbf{1} (1975), no.~155.

\bibitem[May99]{mayConcise}
\bysame, \emph{A concise course in algebraic topology}, University of Chicago
  Press, 1999.

\bibitem[Qui69]{quillen69}
D.~Quillen, \emph{{Rational homotopy theory}}, The Annals of Mathematics
  \textbf{90} (1969), no.~2, 205--295.

\bibitem[SS12]{SchlessingerStasheff}
M.~{Schlessinger} and J.~{Stasheff}, \emph{{Deformation theory and rational
  homotopy type}}, ArXiv e-prints (2012).

\bibitem[Sta83]{stasheff83}
J.~Stasheff, \emph{Rational poincaré duality spaces}, Illinois J. Math.
  \textbf{27} (1983), no.~1, 104--109.

\bibitem[Tan83]{tanre83}
D.~Tanré, \emph{{Homotopie rationnelle: modèles de Chen, Quillen, Sullivan}},
  Lecture Notes in Mathematics, vol. 1025, Springer-Verlag, Berlin, 1983.

\bibitem[Wei94]{weibel}
C.~Weibel, \emph{{An introduction to homological algebra}}, Cambridge Studies
  in Advanced Mathematics, vol.~38, Cambridge University Press, Cambridge,
  1994.

\end{thebibliography}
\noindent
\Addresses

\end{document}